\documentclass[11pt,a4paper,twoside]{article}
\usepackage[hmarginratio=1:1,bmargin=1.3in]{geometry}
\usepackage[OT2,T1]{fontenc}
\usepackage{amsmath, amsthm}
\usepackage{amsfonts}
\usepackage{amssymb}
\usepackage[all]{xy}
\usepackage{graphicx}
\usepackage{color}
\usepackage[nottoc, notlof, notlot]{tocbibind}
\usepackage{array}
\usepackage{url}
\usepackage{rotating}
\usepackage{fancyhdr}
\usepackage{fancyhdr}
\usepackage{titlesec}
\usepackage{xfrac}
\usepackage{multicol}
\usepackage{comment}
\usepackage{diagbox}
\usepackage{pifont}
\usepackage{multirow}
\usepackage{enumerate}
\newcommand{\xmark}{\ding{55}}%

\pagestyle{fancy}

\fancyhead[RO]{}
\fancyhead[LO]{\itshape\nouppercase{\leftmark}}
\fancyhead[RE]{\itshape\nouppercase{\leftmark}}
\fancyhead[LE]{}

\titleformat{\section}[hang]{\center\Large\bf}{\thesection.}{0.5cm}{}

\DeclareSymbolFont{cyrletters}{OT2}{wncyr}{m}{n}
\DeclareMathSymbol{\Sha}{\mathalpha}{cyrletters}{"58}
\DeclareMathSymbol{\Brusse}{\mathalpha}{cyrletters}{"42}

\theoremstyle{plain}
\newtheorem{thm}{Theorem}[section]

\newtheorem{lem}[thm]{Lemma}
\newtheorem{prop}[thm]{Proposition}

\theoremstyle{definition}

\newtheorem{rem}[thm]{Remark}

\newtheorem{exa}[thm]{Example}

\newtheorem{ques}[thm]{Question}

\newcommand{\N}{\mathbb{N}}
\newcommand{\Z}{\mathbb{Z}}

\newcommand{\id}{\mathrm{id}}

\renewcommand{\hom}{\mathrm{Hom}}
\newcommand{\ext}{\mathrm{Ext}}

\newcommand{\aut}{\mathrm{Aut}}

\newcommand{\out}{\mathrm{Out}}
\newcommand{\inn}{\mathrm{Inn}}
\newcommand{\gm}{\mathbb{G}_{\mathrm{m}}}
\newcommand{\ga}{\mathbb{G}_{\mathrm{a}}}
\newcommand{\br}{\mathrm{Br}}

\newcommand{\sln}{\mathrm{SL}_n}
\newcommand{\pgl}{\mathrm{PGL}}
\newcommand{\gln}{\mathrm{GL}_n}

\newcommand{\pic}{\mathrm{Pic}}

\newcommand{\spec}{\mathrm{Spec}}
\renewcommand{\cal}[1]{\mathcal{#1}}
\newcommand{\bb}[1]{\mathbb{#1}}

\newcommand{\Sym}{\mathrm{Sym}}

\usepackage{marvosym}

\title{On composition of torsors}
\author{Mathieu \textsc{Florence}, Diego \textsc{Izquierdo} and Giancarlo \textsc{Lucchini Arteche}}
\date{}

\begin{document}

\maketitle

\begin{abstract} Let $K$ be a field, let $X$ be a connected smooth $K$-scheme and let $G,H$ be two smooth connected $K$-group schemes. Given $Y \to X$ a $G$-torsor and $Z \to Y$ an $H$-torsor, we study whether one can find an extension $E$ of $G$ by $H$ so that the composite $Z \to X$ is an $E$-torsor. We give both positive and negative results, depending on the nature of the groups $G$ and $H$.\\

\noindent{\bf MSC codes:} 14M17, 14L99, 20G15.\\
{\bf Keywords:} composition of torsors, towers of torsors, principal homogeneous spaces, extensions of group schemes.
\end{abstract}

\section{Introduction}

Consider a field $K$, a smooth\footnote{In this article, we follow Hartshorne's definition of smoothness, which in particular implies that the scheme is of finite type (this is \emph{not} an assumption taken by the Stacks Project, for instance).} connected $K$-scheme $X$ and two smooth connected $K$-group schemes $G$ and $H$. In the present article, we are interested in the following question about compositions of torsors:

\begin{ques} \label{ques} Let $Y \to X$ be a $G$-torsor, and let $Z \to Y$ be an $H$-torsor. Can one find an extension of $K$-group schemes
\[1\to H\to E\to G\to 1,\]
together with  an $E$-torsor structure on  the composite $Z \to X$, such that the following holds.
\begin{itemize}
    \item The action of $E$ on $Z$ extends that of $H$.
    \item The $G$-torsors $Z/H \to X$  and $Y \to X$ are isomorphic.
\end{itemize} 
\end{ques}

\noindent Of course, one does not expect to get a positive answer to this question in full generality. The goal of the article is to give both positive and negative results, depending on the nature of the groups $G$ and $H$.

Particular cases of Question \ref{ques} have been considered in \cite{HS}, \cite{BD}, \cite{McFaddin et cie} and \cite{ILA}, as well as \cite{BrionHB}, \cite{BrionPHB} and \cite{BrionHB3}. In \cite{HS}, \cite{BD} and \cite{ILA}, compositions of torsors are used to study obstructions to the local-global principle and to weak approximation over various arithmetically interesting fields, while in \cite{McFaddin et cie} they are used to study invariants of reductive groups. In \cite{BrionHB}, \cite{BrionPHB} and \cite{BrionHB3}, vector bundles (usual and projective) over abelian varieties, which are essentially compositions of torsors, are studied as interesting geometrical objects in their own right. \\

In the present article, we study compositions of torsors in a systematic way, at least in the case where $K$ has zero characteristic. The main positive result in this direction goes as follows.

\begin{thm}\label{thm principal}
Let $K$ be a field of characteristic $0$. Let $X$ be a connected smooth $K$-scheme. Let $G,H$ be smooth connected $K$-group schemes. Let $Y\to X$ be a $G$-torsor and let $Z\to Y$ be an $H$-torsor. Assume one of the following:
\begin{itemize}
    \item $H$ is an abelian variety.
    \item $H$ is a semi-abelian variety and $G$ is linear.
\end{itemize}
Then there exists a canonical extension of $K$-group schemes
\[\quad 1\to H\to E\to G\to 1,\]
together with a canonical structure of an $E$-torsor on the composite $Z\to X$ such that the following holds.
\begin{itemize}
    \item The action of $E$ on $Z$ extends that of $H$.
    \item The $G$-torsors $Z/H \to X$  and $Y\to X$ are isomorphic.
\end{itemize} 
\end{thm}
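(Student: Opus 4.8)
The plan is to build $E$ and the $E$-action simultaneously, by descent along $p\colon Y\to X$. Writing $q\colon Z\to Y$ for the inner torsor, I consider the fppf sheaf of groups $\mathcal A=\underline{\aut}_H(Z/Y)$ of $H$-equivariant automorphisms of $Z$ over $Y$; since $H$ is commutative, right translation identifies it with the sheaf $T\mapsto H(Y_T)$, and the constant sections give an embedding $H\hookrightarrow\mathcal A$. I then introduce the sheaf $\mathcal G$ whose $T$-points are the $H_T$-equivariant automorphisms of $Z_T$ over $X_T$ that cover the action of some $g\in G(T)$ on $Y_T$; sending such an automorphism to $g$ yields an exact sequence $1\to\mathcal A\to\mathcal G\to G$ of sheaves over $K$. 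Giving $Z\to X$ an $E$-torsor structure of the required type is exactly the same as producing a sub-extension $1\to H\to E\to G\to1$ of $\mathcal G$ refining $H\hookrightarrow\mathcal A$: the $E$-action on $Z$ is then tautological, $Z/H\cong Y$ holds by construction, and simple transitivity follows from the fppf-local triviality of the tower $Z\to Y\to X$.

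Everything therefore reduces to two points: that $\mathcal G\to G$ is surjective as an fppf sheaf (every element of $G$ lifts, at least locally, to an automorphism of $Z$), and that the resulting extension can be canonically cut down from $\mathcal A$ to $H$. Both are controlled by a single geometric statement, which I regard as the heart of the matter: \emph{fiberwise over $X$, the $H$-torsor $Z\to Y$ is homogeneous for the $G$-action and arises from a group extension of $G$ by $H$}. Concretely, using the canonical isomorphism $Y\times_XY\cong G\times_KY$ one has to prove that the two pullbacks of $Z$ along the action and the second projection $G\times_KY\rightrightarrows Y$ are isomorphic as $H$-torsors, compatibly on the triple product. This is precisely where homogeneity of principal bundles under (semi-)abelian varieties — as studied in the cited work of Brion — enters, and where the hypotheses are used.

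I expect this homogeneity step to be the main obstacle, and the two bullets correspond to the two mechanisms that make it work. When $H$ is an abelian variety it has no nontrivial characters, so the ``type'' obstruction that would detect non-homogeneity vanishes and one gets homogeneity for \emph{any} $G$, via rigidity and the theorem of the cube for torsors under proper groups. When $H$ is only semi-abelian its torus part does carry characters, and homogeneity can genuinely fail; the hypothesis that $G$ is linear rescues it, since a connected linear group satisfies $\pic^0(G)=0$ and admits no nonconstant homomorphism to a semi-abelian variety, so the obstruction again dies. That some hypothesis on $G$ is unavoidable here is already visible for $H=\gm$, $G$ an elliptic curve, $Y=G$ and $Z=L^\times$ with $L$ ample: then $T_a^*L\not\cong L$ for $a\neq0$, no lift exists, and indeed $L^\times$ cannot be a torsor under any extension of $G$ by $\gm$.

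Granting the homogeneity step, I would finish as follows. The quotient $\mathcal A/H$, namely morphisms $Y\to H$ modulo constants, is rigid: it is étale because morphisms to an abelian variety are rigid and, for the torus part, because $\mathcal O^\times(Y)/K^\times$ is finitely generated by Rosenlicht's theorem. Hence $\mathcal G/H$ is an extension of $G$ by an étale group, its identity component maps isomorphically onto $G$, and the inverse is a canonical homomorphic section $s\colon G\to\mathcal G/H$. Taking $E$ to be the preimage of $s(G)$ in $\mathcal G$ yields a canonical extension $1\to H\to E\to G\to1$ acting tautologically on $Z$; over the connected base $X$ it is forced to be constant and descends to the desired $E/K$. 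The remaining verifications — representability, smoothness of $E$, and compatibility of the $E$-action with the given $H$- and $G$-actions — are routine in characteristic $0$ (where Chevalley's structure theorem and the relevant rigidity are available), so the entire difficulty is concentrated in the homogeneity statement of the second paragraph.
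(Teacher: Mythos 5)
Your construction of $E$ is essentially the paper's: the sheaf $\mathcal G$ of $H$-equivariant automorphisms of $Z$ over $X$ covering the $G$-action on $Y$, the identification of $\mathcal A$ with $T\mapsto H(Y_T)$, the observation that $\mathcal A/H$ is discrete (Rosenlicht), and the canonical splitting obtained from the identity component of $\mathcal G/H$. That skeleton is sound, modulo two points you defer as routine but which do require care: for the identity component of $\mathcal G/H$ to map \emph{isomorphically} (not merely isogenously) onto $G$ you need $\mathcal A/H$ to be torsion-free, which Rosenlicht's lemma does provide; and passing from ``every $g\in G(\Omega)$ lifts to an automorphism of $Z_\Omega$ for $\Omega$ separably closed'' to surjectivity of $\mathcal G\to G$ as a sheaf needs a spreading-out argument over $\overline{K(G)}$, which the paper isolates as a separate lemma.

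The genuine gap is that you never prove the statement you yourself identify as the heart of the matter --- that the class of $Z\to Y$ in $H^1(Y,H)$ is $G(\Omega)$-invariant for every separably closed $\Omega/K$ --- and the mechanisms you sketch for it do not work as stated. For $H$ an abelian variety you invoke ``rigidity and the theorem of the cube for torsors under proper groups''; but $Y$ and $G$ are arbitrary smooth, in general non-proper, schemes here, so cube/seesaw arguments are unavailable. The paper instead uses that $H^1(Y,H)$ is torsion (Raynaud), so every class lifts to $H^1(Y,H[n])$, together with the fact that a connected group acts trivially on cohomology with finite coefficients (Borovoi--Demarche). For $H$ semi-abelian and $G$ linear, ``$\pic^0(G)=0$'' is a statement about $G$ itself, not about the action of $G(K)$ on $\pic(Y)$ for a $G$-torsor $Y$ over an arbitrary smooth base; what is actually needed there is Sumihiro's linearization theorem. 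More importantly, even once $G(K)$ is known to act trivially on the image $N$ of $H^1(Y,T)$ and on the image $M$ of $H^1(Y,H)$ in $H^1(Y,A)$, this only shows that the action on $H^1(Y,H)$ is encoded by a homomorphism $G(K)\to\mathrm{Hom}(M,N)$; killing that homomorphism requires the further argument that $M$ is torsion of cofinite type, hence $\mathrm{Hom}(M,N_{\mathrm{tors}})$ is profinite, while $G(K)$ is generated by its divisible subgroups in characteristic $0$. This d\'evissage is where most of the paper's work in the semi-abelian case is concentrated, and it is absent from your proposal.
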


In order to prove this result, we first give in Section \ref{sec abstract results} an abstract statement (Theorem \ref{thm empilement abstrait 1}) for torsors and groups satisfying certain technical conditions. In Section \ref{sec proof}, we prove that these conditions are met in the cases given in Theorem \ref{thm principal}. In Section \ref{sec char pos}, we present a weaker version of Theorem \ref{thm principal} that works over arbitrary fields (cf.~Theorem \ref{thm principal car positive}).\\

Theorem \ref{thm principal} covers a certain number of the previously known results in the literature: \cite[Lem.~2.13]{BD} deals with the case where $H=\mathbb{G}_{\mathrm{m}}$ and $G$ is linear, \cite[Thm.~A.1.5]{McFaddin et cie} deals with the case where $X=\mathrm{Spec}(K)$, $H$ is a special torus and $G$ is reductive, while \cite[Thm.~A.1]{ILA} deals with the case where $H$ is a torus and $\mathrm{Pic}(\bar{G})=0$. In \cite{BrionHB}, \cite{BrionPHB} and \cite{BrionHB3}, Brion studies homogeneous bundles over abelian varieties, getting results that are related to our main theorem in the case where $X=\spec(K)$ and $G$ is an abelian variety, although they do not deal directly with compositions of torsors with such $G$ (see however \cite[Cor.~3.2]{BrionHB} and compare with our Theorem \ref{thm empilement abstrait 1} and Proposition \ref{prop inv necessaire}).

Theorem \ref{thm principal} does not cover all cases dealt with by Harari and Skorobogatov in \cite[Prop.~1.4]{HS}, since they consider $H$ to be of multiplicative type, and hence it may be non-connected. However, using a variant of the abstract Theorem \ref{thm empilement abstrait 1} (cf.~Theorem \ref{thm empilement abstrait 2}), we recover their result. Since they also provide an abstract result in their article (cf.~\cite[Thm.~1.2]{HS}), we compare this result with ours at the end of Section \ref{sec abstract results} (cf.~Remark \ref{rem HS}).\\

Finally, in Section \ref{sec counterexamples}, we present a certain number of counterexamples to Question \ref{ques}. Table \ref{table} summarizes both the positive and negative results we obtain in characteristic zero.\\

\begin{table}[h!]
\begin{center}
    \begin{tabular}{|c||c|c|c|c|}
    \hline
    \diagbox{$H$}{$G$} & t. & u. & s.s. & a.v. \\ \hline\hline
    t. & \checkmark & \checkmark & \checkmark & \xmark \\ \hline
    u.  & \xmark & \xmark & \xmark & \xmark \\ \hline
    s.s. & \xmark & \xmark & \xmark & \xmark \\ \hline
    a.v. & \checkmark & \checkmark & \checkmark & \checkmark \\ \hline
    \end{tabular}
    \hspace{3em}
    \framebox{\begin{tabular}{r l}
    t.~:& torus \\
    u.~:& unipotent \\
    s.s.~: & semisimple \\
    a.v.~: & abelian variety \\
    \checkmark~: & positive answer \\
    \xmark~: & negative answer
    \end{tabular}}
    \caption{Answer to Question \ref{ques} for several types of groups $G$ and $H$ over a field of characteristic zero.}\label{table}
\end{center}
\end{table}

\paragraph*{Acknowledgements.} The authors would like to warmly thank Michel Brion for his comments and suggestions, as well as two anonymous referees whose comments and questions helped us to improve the article. They also thank Ziyang Zhang, for his reading and comments. The third author's research was partially supported by ANID via FONDECYT Grant 1210010.

\section{Abstract results}\label{sec abstract results}

In this section, unless otherwise stated, $K$ is an arbitrary field. For a $K$-scheme $W$, we denote by $X_W,Y_W,Z_W,H_W,G_W$ the $W$-schemes obtained by base change from $X,Y,Z,H,G$ respectively. We start by proving the following abstract theorem, which will be the key tool to settle Theorem \ref{thm principal}.

\begin{thm}\label{thm empilement abstrait 1}
Let $K$ be a field. Let $X$ be a smooth $K$-scheme. Let $G,H$ be smooth connected $K$-group schemes with $H$ abelian. Let $Y\to X$ be a $G$-torsor and let $Z\to Y$ be an $H$-torsor. Finally, let $\cal M$ be the sheaf over the small smooth site over $K$ associated to the presheaf given by $(W\mapsto H(Y_W)/H(W))$. Assume the following:
\begin{itemize}
\item[(i)] The class of $Z_{\Omega}\to Y_{\Omega}$ in $H^1(Y_{\Omega},H_{\Omega})$ is $G(\Omega)$-invariant for every separably closed field $\Omega/K$.
\item[(ii)] The sheaf $\cal M$ is \'etale-locally isomorphic to the constant sheaf $\Z^n$ for a certain $n\in\N$. In particular, it is representable by a $K$-group-scheme $M$.
\end{itemize}
Then there exists a canonical extension of $K$-group schemes
\[\quad 1\to H\to E\to G\to 1,\]
together with a canonical structure of an $E$-torsor on the composite $Z\to X$ such that the following holds.
\begin{itemize}
    \item The action of $E$ on $Z$ extends that of $H$.
    \item The $G$-torsors $Z/H \to X$  and $Y\to X$ are isomorphic.
\end{itemize} 
\end{thm}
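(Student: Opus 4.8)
The plan is to realize $E$ as a sheaf of symmetries of the tower $Z\to Y\to X$. For a smooth $K$-scheme $W$, let $\widetilde E(W)$ be the set of pairs $(g,\phi)$ where $g\in G(W)$ and $\phi\colon Z_W\to Z_W$ is an $H_W$-equivariant isomorphism lying over the translation $t_g\colon Y_W\to Y_W$ (the action of $g$ on the $G$-torsor $Y_W$). Composition makes $\widetilde E$ a sheaf of groups acting tautologically on $Z$ by $(g,\phi)\cdot z=\phi(z)$, and the first projection gives $\widetilde E\to G$ whose kernel is the sheaf $\mathcal H$ of automorphisms of the $H$-torsor $Z\to Y$, i.e.\ the sheafification of $W\mapsto H(Y_W)$ (here one uses that $H$ is abelian). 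The subsheaf $H\hookrightarrow\mathcal H$ of sections pulled back from the base is central in $\widetilde E$: the translations $t_g$ are automorphisms of $Y_W$ over $X_W$, hence fix all sections of $H$ coming from $X_W$, a fortiori those from $W$. By definition of $\mathcal M$ one has $\mathcal H/H\cong\mathcal M$, so dividing by $H$ yields two exact sequences of sheaves of groups
\[1\to H\to\widetilde E\to \mathcal E\to 1,\qquad 1\to\mathcal M\to \mathcal E\to G\to 1,\]
where $\mathcal E:=\widetilde E/H$ and the conjugation $G$-action induced on $\mathcal M$ is the natural one.

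First I would check that $\widetilde E\to G$, equivalently $\mathcal E\to G$, is an epimorphism of sheaves. Surjectivity of a morphism of sheaves on the small smooth site is tested on stalks, i.e.\ over separably closed fields $\Omega/K$; and over such an $\Omega$ an element $g\in G(\Omega)$ lifts to $\widetilde E(\Omega)$ exactly when $t_g^{*}Z_\Omega\cong Z_\Omega$ as $H_\Omega$-torsors on $Y_\Omega$, that is, when the class of $Z_\Omega\to Y_\Omega$ in $H^1(Y_\Omega,H_\Omega)$ is fixed by $g$. This is precisely hypothesis (i); the fibres of $\widetilde E\to G$ being torsors under the stalks of $\mathcal H$, they are nonempty once this invariance holds. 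Hence both displayed sequences are short exact.

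Next I would use (ii) to split the second sequence canonically. Since $\mathcal M\cong M$ is a lattice, it is étale over $K$, so $\mathcal E$—an extension of the smooth connected $G$ by $M$—is representable by a smooth $K$-group scheme with $\mathcal E\to G$ étale. Its identity component $\mathcal E^{0}$ meets $M$ in a subgroup scheme that is étale and, being closed in the Noetherian scheme $\mathcal E^{0}$, of finite type, hence finite; as $M$ is torsion-free this intersection is trivial. Therefore $\mathcal E^{0}\to G$ is a monomorphism, thus a closed immersion, of smooth connected groups of the same dimension, and hence an isomorphism. Its inverse is a canonical homomorphic section $s\colon G\xrightarrow{\ \sim\ }\mathcal E^{0}\hookrightarrow\mathcal E$. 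I then define $E$ as the preimage of $\mathcal E^{0}$ in $\widetilde E$, equivalently the pullback of the central extension $1\to H\to\widetilde E\to\mathcal E\to1$ along $s$; it sits in a canonical extension $1\to H\to E\to G\to1$ and is representable, being an extension of $G$ by $H$.

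Finally, $E\subseteq\widetilde E$ inherits the tautological action on $Z$, which lies over $X$ since its elements lie over translations $t_g$ of $Y$ over $X$, and which restricts on $H$ to the original torsor action: this is the first required compatibility. A dimension count gives $\dim E+\dim Z=\dim(Z\times_X Z)$, and trivializing both torsors over an étale cover of $X$ reduces the claim that $Z\to X$ is an $E$-torsor, as well as the identification of $Z/H\to X$ with $Y\to X$ as $G$-torsors, to the split local situation, where both are immediate. The main obstacle is exactly the content of the two hypotheses: (i) is what forces $\widetilde E\to G$ to be surjective, so that $E$ genuinely surjects onto $G$, while (ii) makes the kernel correction $\mathcal M$ a torsion-free étale lattice and thereby produces the canonical section $s$; without controlling $\mathcal M$ the naive symmetry sheaf $\widetilde E$ carries the too-large kernel $H(Y)$ rather than $H$.
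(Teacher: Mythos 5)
Your construction is essentially the paper's own: your $\widetilde E$ of pairs $(g,\phi)$ is the pullback of $\underline{\mathrm{Aut}}^H_X(Z)\to\underline{\mathrm{Aut}}_X(Y)$ along $G\subset\underline{\mathrm{Aut}}_X(Y)$, the kernel is identified with the sheafification of $W\mapsto H(Y_W)$ via the same reference to Giraud, and the canonical splitting of $\cal E=\widetilde E/H\to G$ obtained from torsion-freeness of $M$ and connectedness of $G$ is exactly how the paper extracts $1\to H\to E\to G\to 1$. (Your remark that $H$ is in fact \emph{central} in $\widetilde E$ is correct and slightly sharper than the paper's route through ``$H$ is characteristic in $A$, which is normal in $E'$''.)

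The genuine gap is the surjectivity of $\widetilde E\to G$, which you dispatch with ``surjectivity of a morphism of sheaves on the small smooth site is tested on stalks, i.e.\ over separably closed fields $\Omega/K$''. That is not correct as stated: the stalks of this topos at geometric points of smooth $K$-schemes are filtered colimits over \'etale neighbourhoods, hence are computed on \emph{strictly henselian local rings} rather than on their separably closed residue fields, and even identifying the stalk of $\widetilde E$ with its value on such a ring already requires knowing that the automorphism functor commutes with the relevant filtered colimits; moreover $\spec(\Omega)$ for a general separably closed $\Omega/K$ is not an object of the site at all. Hypothesis (i) only hands you, for each separably closed $\Omega$ and each $g\in G(\Omega)$, a lift in $\widetilde E(\Omega)$; upgrading this pointwise statement to an epimorphism of sheaves --- i.e.\ exhibiting a smooth cover $(V_i\to G)$ over which the tautological section of $G$ lifts --- is precisely the content of the paper's Lemma \ref{lem surjectivite}, which takes $\Omega=\overline{K(G)}$, spreads a lift of the generic point out to some smooth $V\to U\subset G$ using the limit property, lifts $\bar K$-points by a separate specialization argument, and then covers $G$ by finitely many translates $\gamma\cdot V$. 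Without this (or an equivalent argument) your two short exact sequences are not established, and everything downstream is conditional. The remainder of your argument --- representability, triviality of $M\cap\cal E^0$, the canonical section $s$, the pullback defining $E$, and the verification that $Z\to X$ is an $E$-torsor --- is sound and parallels the paper.
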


We start with a technical lemma, which might have an interest of its own.

\begin{lem}\label{lem surjectivite}
Let $K$ be a field and let $G$ be a connected (resp.~smooth connected) $K$-group scheme. Denote by $K(G)/K$ the function field of $G/K$, and by $\Omega$ an algebraic (resp.~separable) closure of $K(G)$. Let $\cal E$ be a contravariant group functor over the fppf (resp.~small smooth) site of $K$, equipped with a $K$-homomorphism $\pi:\cal E\to G$. Assume the following:
\begin{enumerate}[(a)]
\item The functor $\mathcal E$, on $K$-algebras, commutes with filtered direct limits.
\item The arrow $\pi(\Omega):\mathcal E(\Omega) \to G(\Omega)$ is surjective.
\end{enumerate}
Then, the following hold.
\begin{enumerate}[(1)]
    \item The arrow $\cal E(\bar K)\to G(\bar K)$ is surjective, where $\bar K$ is the algebraic (resp. separable) closure of $K$.
    \item There exists a finite set $I$ and an fppf (resp.~smooth) cover $(V_i \to G)_{i\in I}$ such that, for each $i\in I$, the arrow $V_i\to G$, considered as an element of $G(V_i)$, lifts via $\pi(V_i):\mathcal E(V_i) \to G(V_i)$. As a consequence, the arrow $\pi:\cal E\to G$ is surjective.
\end{enumerate}
\end{lem}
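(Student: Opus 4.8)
The plan is to prove (2) first and deduce (1) (and the final surjectivity assertion) from it, since the substantive content is the passage from a lift at the single generic point of $G$ to an honest cover of all of $G$; this is where the group structure of $\cal{E}$ and $G$ is used in an essential way. First I would set up and spread out. Since $G$ is connected of finite type over a field it is (geometrically) irreducible, with function field $K(G)$; the generic point gives $\eta\in G(K(G))$, and by hypothesis (b) its image $\eta_\Omega\in G(\Omega)$ is $\pi(\Omega)(e)$ for some $e\in\cal{E}(\Omega)$. Writing $\Omega=\varinjlim_A A$ over the finitely generated $K$-subalgebras $A\subseteq\Omega$ and invoking (a), I descend $e$ to some $e_A\in\cal{E}(A)$, where $A$ is a domain with fraction field $L:=\mathrm{Frac}(A)\subseteq\Omega$. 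The morphism $f:=\pi(e_A)\colon\spec A\to G$ sends the generic point of $\spec A$ to that of $G$, hence is dominant, and $L/K(G)$ is algebraic of transcendence degree $\dim G$ — finite separable in the separable case. By generic flatness (together with generic separability of $f$ in the smooth case), after shrinking I obtain dense opens $W\subseteq\spec A$ and $U\subseteq G$ with $q:=f|_W\colon W\to U$ faithfully flat of finite type, indeed finite étale (so that $W$ is smooth over $K$) in the smooth case; restricting $e_A$ gives $\tilde q\in\cal{E}(W)$ with $\pi(\tilde q)=q$.

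The crux is then to turn this patch, which only reaches the dense open $U\subsetneq G$, into a cover of all of $G$ by exploiting the group law. I set $V:=W\times_K W$, let $\tilde q_1,\tilde q_2\in\cal{E}(V)$ be the pullbacks of $\tilde q$ along the two projections, and form $\varepsilon:=\tilde q_1\cdot\tilde q_2^{-1}$ in the group $\cal{E}(V)$. As $\pi$ is a homomorphism, $\beta:=\pi(\varepsilon)\colon V\to G$ is the morphism $(w_1,w_2)\mapsto q(w_1)q(w_2)^{-1}$, that is, the composite of $q\times q\colon V\to U\times_K U$ with $n\colon U\times_K U\to G,\ (u_1,u_2)\mapsto u_1u_2^{-1}$. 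Now $n$ is the restriction to the open $U\times_K U$ of the map $G\times_K G\to G,\ (a,b)\mapsto ab^{-1}$, which factors as the shear automorphism $(a,b)\mapsto(a,ab^{-1})$ followed by the second projection, and is therefore faithfully flat (resp.\ smooth); composing with the flat (resp.\ smooth) surjection $q\times q$ shows $\beta$ is fppf (resp.\ smooth). It is surjective because $q$ is onto $U$ and, $U$ being dense open in the irreducible group $G$, one has $UU^{-1}=G$ on geometric points (for $g\in G(\bar K)$ the dense opens $U$ and $gU$ meet). Hence $\beta\colon V\to G$ is an fppf (resp.\ smooth) cover whose structural morphism lifts, through $\varepsilon$, to $\cal{E}(V)$; this proves (2), with $I$ a singleton.

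For (1), given $g\in G(\bar K)$ the fibre $V_g=V\times_{\beta,G}\spec\bar K$ is nonempty of finite type (resp.\ smooth), hence has a $\bar K$-point $v$ since $\bar K$ is algebraically (resp.\ separably) closed, and $\varepsilon|_v\in\cal{E}(\bar K)$ lifts $g$. Finally, for any $K$-scheme $T$ and any $g\in G(T)$, the base change $T\times_{g,G,\beta}V\to T$ is an fppf (resp.\ smooth) cover of $T$ over which $g$ lifts to the pullback of $\varepsilon$, so $\pi$ is surjective as a map of sheaves. The step I expect to be the main obstacle is precisely the second paragraph: hypothesis (b) only furnishes a lift over the single generic point, and upgrading it to a genuine cover of $G$ requires both the generic flatness input (to make the patch $q$ flat) and the homogeneity coming from the group law (to spread that patch over all of $G$ via $UU^{-1}=G$). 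One caveat, harmless for our purposes: since $\Omega$ is a field, all lifts are defined over reduced test schemes, so the construction produces reduced covers and applies as stated once $G$ is reduced — in particular always in characteristic $0$, by Cartier's theorem, which is the situation in which we use it.
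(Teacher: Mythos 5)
Your proof is correct, and its first half (descending the generic lift along a filtered colimit and spreading it out to a flat, resp.\ finite \'etale, patch $q\colon W\to U\subseteq G$ carrying a lift $\tilde q\in\mathcal{E}(W)$) is the same as the paper's. Where you genuinely diverge is in how you globalize that patch in part (2). The paper translates the patch by rational points: since $G(\bar K)$ is dense, the translates $\gamma\cdot V$ cover $G_{\bar K}$, one extracts a finite subcover defined over a finite (separable) extension $L/K$, lifts the $\gamma$'s to $\mathcal{E}(\bar K)$ using part (1), and then descends by composing with the finite locally free projection $G_L\to G$; so in the paper (2) depends on (1). You instead form the single section $\varepsilon=\tilde q_1\tilde q_2^{-1}\in\mathcal{E}(W\times_K W)$, observe that its image $\beta\colon(w_1,w_2)\mapsto q(w_1)q(w_2)^{-1}$ is flat (resp.\ smooth) because $(a,b)\mapsto ab^{-1}$ is a shear followed by a projection, and surjective because $UU^{-1}=G$ for a dense open of an irreducible group; this yields a one-element cover defined directly over $K$, with no detour through $L/K$, and it reverses the logical order, since (1) then follows by specializing $\varepsilon$ at a $\bar K$-point of a fibre of $\beta$. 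Both arguments use the group structure of $\mathcal{E}$ in an essential way (the paper to translate the lift by lifts of the $\gamma$'s, you to form the difference $\tilde q_1\tilde q_2^{-1}$). Your closing caveat about reducedness of $G$ in the fppf case is shared by the paper's own spreading-out step (generic flatness over a non-reduced affine chart is problematic) and is harmless, as the lemma is only ever applied to smooth $G$.
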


\begin{proof}
We prove (1). To do so, we may change the base field from $K$ to $\bar K$, reducing us to the case $K=\bar K$. Pick a point $g \in G(K) \subset G(\Omega)$. Let $e \in \mathcal E(\Omega)$ be a lift of $g$.  Let $U_0=\spec(A_0)$ be a non-empty affine open subscheme of $G$. Write $\Omega$ as the direct limit (union) $\varinjlim A_j$, of its flat (resp.~smooth) and finitely presented $A_0$-subalgebras $A_j$. By condition (a), $e$ belongs to $\mathcal E(A_j)$ for some $j$. Since fppf (resp.~smooth) morphisms are open (cf. \cite[Tag~01UA]{SP}), in geometric terms, there exists a non-empty affine open $U \subset U_0 \subset G$, and an fppf (resp.~smooth) morphism $V:=\spec(A_j) \to U$, such that $e$ belongs to $\cal E(V)$. Since $V$ is a non-empty (resp.~smooth) $K$-scheme of finite-type and $K= \bar K$, there exists a closed point $v: \spec(K) \to V$. Then, the image of $e$ via the morphism $\cal E(V) \to \cal E(K)$ induced by $v$ is the desired lift of $g$.\\

We prove (2). Denote by $g \in G(\Omega)$ the generic point of $G$. Let $e \in \mathcal E(\Omega)$ be a lift of $g$, which exists by condition (b). The same limit argument used in (1), produces an affine open $U \subset G$, and an fppf (resp.~smooth) morphism $V\to U$, such that the composite $V \to U \to G$ lifts via $\pi$, to an element of $\mathcal E(V)$. Since $G(\bar K)$ is Zariski-dense in $G$, the translates $\gamma\cdot V$, for $\gamma \in G(\bar K)$, form an fppf (resp.~smooth) cover of $G_{\bar K}$, from which we may extract a finite cover. Since these $\gamma$'s lift to $\mathcal E(\bar K)$ by (1), there exists a finite (resp. finite separable) field extension $L/K$, such that a cover of $G_L$ exists, with the required property. Composing with the projection $G_L \to G$, which is finite (resp. finite separable) and locally free, hence fppf (resp.~smooth), gives such a cover over $K$.

For the last assertion, consider an arbitrary morphism of $K$-schemes (resp.~smooth $K$-schemes) $\phi:S\to G$ and define the fppf (resp.~smooth) cover $(S\times_G V_i\to S)_{i\in I}$ of $S$ by pullback. Since the map $\phi_i:S\times_G V_i\to G$ induced by $\phi$ factors through $V_i\to G$, which lifts to $\cal E(V_i)$, we see that $\phi_i$ lifts to $\cal E(S\times_G V_i)$ and the surjectivity follows.
\end{proof}

The following is an easy exercise given the actual literature, but we state it here since it is used several times in what follows.

\begin{lem}\label{lem representabilite}
Let $K$ be a field and let $G,H$ be $K$-group schemes. Let $G^0,H^0$ denote the corresponding neutral connected components. Assume that $G^0,H^0$ are smooth and that $F:=H/H^0$ is \'etale-locally isomorphic to $\Z^n$ for some $n\in\N$. Let
\[1\to H\to \cal E\to G\to 1,\]
be an exact sequence of sheaves over the fppf (resp.~small smooth) site of $K$. Then $\cal E$ is representable by a $K$-group scheme $E$.
\end{lem}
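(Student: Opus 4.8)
The plan is to read the exact sequence as the assertion that the projection $\cal E \to G$ is an $H$-torsor for the chosen topology (fppf, resp.\ small smooth), and then to prove the representability of this torsor by a two-step dévissage separating the connected part $H^0$ from the étale ``lattice'' part $F = H/H^0$. Since $G$ is representable by hypothesis, once the torsor $\cal E \to G$ is known to be representable by a scheme, the group law on the sheaf $\cal E$ transports to the representing object by Yoneda and exhibits it as a $K$-group scheme $E$; so the whole content lies in the representability of the torsor.

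First I would use that $H^0$, being the identity component of $H$, is a characteristic subgroup of $H$; as $H$ is normal in $\cal E$, the subsheaf $H^0$ is normal in $\cal E$. I would therefore form the quotient sheaf $\cal E_1 := \cal E / H^0$, which fits into an exact sequence
\[ 1 \to F \to \cal E_1 \to G \to 1, \]
obtained from the given one by the sheaf-theoretic third isomorphism theorem (using $\cal E/H \cong G$ and $H/H^0 = F$). The map $\cal E_1 \to G$ is then an $F$-torsor over the scheme $G$. Because $F$ is étale-locally isomorphic to $\Z^n$, it is a smooth, separated, étale $K$-group scheme, so this torsor is separated and étale over $G$; a separated, locally quasi-finite sheaf over a scheme that is an $F$-torsor is itself a scheme (effectivity of étale descent here, i.e.\ representability of torsors under étale group schemes). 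Hence $\cal E_1$ is representable by a $K$-group scheme $E_1$.

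Next I would observe that $\cal E \to E_1$ is an $H^0$-torsor over the scheme $E_1$, where $H^0$ is a smooth \emph{connected} algebraic group over $K$ (recall that the identity component of a locally finite type $K$-group scheme is of finite type). Representability of $\cal E$ then follows from standard representability results for torsors under such groups: when $H^0$ is affine, the torsor $\cal E \to E_1$ is an affine morphism, hence representable by faithfully flat descent; in general one splits $H^0$ into its affine part and its abelian-variety quotient, the torsor under the latter being representable by a proper smooth scheme and the remaining piece being an affine-group torsor. Combining the two steps produces a $K$-group scheme $E$ representing $\cal E$.

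The step I expect to be the genuine (if mild) obstacle is the representability of the $H^0$-torsor $\cal E \to E_1$ when $H^0$ is \emph{not} affine --- precisely the case of an abelian or semi-abelian variety needed for Theorem \ref{thm principal}: here one cannot merely invoke descent of affine morphisms and must instead appeal to the structure of $H^0$ over $K$ together with the known representability of torsors under abelian varieties. The étale step and the affine step are routine, and the rest is bookkeeping with normal subsheaves and quotient sheaves, which is why the statement can fairly be labelled an easy exercise.
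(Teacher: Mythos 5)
Your proposal follows the paper's proof essentially step for step: quotient by the normal subsheaf $H^0$ to obtain an $F$-torsor over $G$, represent it, then represent the remaining $H^0$-torsor by splitting $H^0$ via Chevalley into a linear part and an abelian-variety quotient and invoking representability of torsors under affine groups and under abelian varieties. One step, however, rests on a general principle that is false as you state it. The group $F$ is étale and separated but \emph{not} quasi-compact over $K$ (it is a form of $\Z^n$, an infinite disjoint union of points), so ``effectivity of étale descent'' does not apply: descent is effective for quasi-affine morphisms, and torsors under non-quasi-compact étale groups such as $\Z$ genuinely can fail to be representable over a non-normal base. What rescues the argument here is that $G$ is smooth (because $G^0$ is), hence normal, and over a connected normal (more generally, geometrically unibranch) base every torsor under a twisted form of $\Z^n$ is Zariski-locally trivial and therefore representable --- this is exactly the content of the SGA\,7 result (Exp.~8, Prop.~5.1) that the paper cites at this point, and it is the one place in the lemma where a specific citation is doing real work. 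A second, minor omission: the lemma is stated for both the fppf and the small smooth site, while the representability theorems for the $H^0$-torsor are fppf statements; to treat the smooth-site case one should note, as the paper does, that a torsor under the smooth group $H^0$ on the smooth site arises from a unique fppf torsor, whose representability then transfers back. Neither point alters the architecture of your argument, but the first replaces a wrong general statement by the correct special one.
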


\begin{proof}
Since $H$ is normal in $\cal E$ and $H^0$ is characteristic in $H$, we obtain that $H^0$ is normal in $\cal E$. We may then quotient $\cal E$ by $H^0$ in order to get an exact sequence
\[1\to F\to\cal E'\to G\to 1.\]
Since $F$ is locally isomorphic to $\Z^n$ we know by \cite[Exp.~8, Prop.~5.1]{SGA7} that every $F$-torsor over any connected component of $G$ is locally trivial, hence representable. In particular, this is the case for $\cal E'$. Thus we have an exact sequence of $K$-group schemes
\[1\to F\to E'\to G\to 1,\]
which gives us the following exact sequence of sheaves
\begin{equation}\label{eqn suite H0}
1\to H^0\to \cal E\to E'\to 1.
\end{equation}
Now, by Chevalley's Theorem (cf.~\cite[Thm.~1.1]{ConradChevalley} or \cite[\S9.2, Thm.~1]{NeronModels}), there is an exact sequence
\[1\to L\to H^0\to A\to 1,\]
with $L$ linear and $A$ an abelian variety. Since $L$ is a characteristic subgroup of $H^0$ and $H^0$ is normal in $\cal E$, we see that $L$ is normal in $\cal E$. We may then quotient $\cal E$ by $L$ in order to get an exact sequence
\[ 1\to A \to \cal E''\to E'\to 1.\]
Then $\cal E''$ is representable by \cite[III, Thm.~4.3.(c)]{Milne} since $A$ is smooth, proper and connected over $K$. Thus we have an exact sequence of $K$-group schemes
\[1\to A\to E''\to E'\to 1,\]
which gives us the following exact sequence of sheaves
\[1\to L\to \cal E\to E''\to 1.\]
Then $\cal E$ is representable by \cite[III, Thm.~4.3.(a)]{Milne} since $L$ is affine.

Finally, note that Milne's results \cite[III, Thm.~4.3.(a),(c)]{Milne} are stated over the fppf site. However, if we are over the small smooth site, since $H^0$ is smooth and $\cal E$ is an $H^0$-torsor (cf.~sequence \eqref{eqn suite H0}), we deduce that it corresponds to a unique $H^0$-torsor over the fppf site (cf.~\cite[Thm.~11.7.1, Rem.~11.8.3]{GrothendieckBrauerIII}), which is then representable. This implies the representability of $\cal E$ over the small smooth site (by the same $K$-scheme).
\end{proof}

We are now ready to prove Theorem \ref{thm empilement abstrait 1}.

\begin{proof}[Proof of Theorem \ref{thm empilement abstrait 1}]
For a $K$-scheme $W$, consider the group $\aut_{X_W}^{H}(Z_W)$ of $X_W$-auto\-morphisms $\varphi$ of $Z_W$ that are compatible with the action of $H$ in the sense that the following diagram commutes:
\[\xymatrix{
H\times Z_W \ar[r]^-{a_W} \ar[d]_{\id\times\varphi} & Z_W \ar[d]^{\varphi} \\
H\times Z_W \ar[r]^-{a_W} & Z_W,
}\]
where $a$ denotes the morphism defining the action of $H$ on $Z$ and $a_W$ the corresponding morphism after base change. The functor $W\mapsto \aut_{X_W}^{H}(Z_W)$ defines a group presheaf over the small smooth site over $K$. Denote by $\underline{\aut}_X^{H}(Z)$ the corresponding sheaf and consider the subsheaf $\underline{\aut}_{Y}^{H}(Z)$ defined by taking the subgroup $\aut_{Y_W}^{H}(Z_W)$ of $\aut_{X_W}^{H}(Z_W)$ for each $W$. Since every element in $\aut_{X_W}^{H}(Z_W)$ induces an $X_W$-automorphism of $Y_W$, we have an exact sequence of sheaves
\[1\to \underline{\aut}_{Y}^{H}(Z)\to \underline{\aut}_X^{H}(Z)\xrightarrow{\pi} \underline{\aut}_X(Y),\]
where $\underline{\aut}_X(Y)$ denotes the sheaf of $X$-automorphisms of $Y$.

Note that $G$ is naturally a subgroup of $\underline{\aut}_X(Y)$.  Taking the pullback via $\pi$, we get an exact sequence of sheaves
\[1\to \cal A\to \cal E'\xrightarrow{\pi} G,\]
where $\cal A=\underline{\aut}_{Y}^{H}(Z)$. Now, the functor $V/Y\mapsto \aut_{V}^{H}(Z\times_Y V)$ over the small smooth site of $Y$ is represented, as a $Y$-scheme, by $H_Y$ (cf.~for instance \cite[III.\S1.5]{Giraud}). In other words, $\cal A(W)=\aut_{Y_W}^{H}(Z_W)=H(Y_W)$ and hence $\cal M=\cal A/H$. Thus, by (ii), we get an exact sequence of group sheaves over $K$
\[1\to H\to \cal A\to M\to 1,\]
where $M$ is a $K$-group scheme that is \'etale-locally isomorphic to $\Z^n$ for some $n\in\N$. By Lemma \ref{lem representabilite}, it follows that $\cal A$ is represented  by a $K$-group scheme $A$. Moreover, since $H$ is connected, it corresponds to the neutral connected component of $A$. In particular, $H$ is a characteristic subgroup of $A$.

On the other hand, since $X$, $Z$, $H$ and $G$ are of finite type over $K$, the functor $\cal E' \subset \underline{\aut}_{K}(Z)$ commutes with direct limits. Moreover, by (i), if we set $\Omega:=\overline{K(G)}$, we know that $Z_\Omega$ is isomorphic to $g^*Z_\Omega$ as an $H$-torsor over $G_\Omega$ for every $g\in G(\Omega)$, and thus the arrow $\cal E'(\Omega)\to G(\Omega)$ is surjective. Then, by Lemma \ref{lem surjectivite}, we get that the arrow $\pi:\cal E'\to G$ is surjective. In particular, we get an exact sequence of sheaves
\[1\to A \to \cal E'\xrightarrow{\pi} G\to 1.\]
And since $M=A/H$ is locally isomorphic to $\Z^n$ by (ii), we see by Lemma \ref{lem representabilite} that $\cal E'$ is representable. Thus we have an exact sequence of $K$-group \emph{schemes}
\begin{equation}\label{eqn seq A & G}
 1\to A \to E'\xrightarrow{\pi} G\to 1.\tag{$\mathcal S$}
\end{equation}
Since $A$ is normal in $E'$ and $H$ is characteristic in $A$, we obtain that $H$ is normal in $E'$. We may then quotient by $H$ in order to get an exact sequence
\begin{equation}\label{eqn seq M & G}
1\to M\to F\xrightarrow{\bar{\pi}} G\to 1.\tag{$\bar{\mathcal S}$}
\end{equation}
Since $M$ is discrete and torsion-free by (ii), and since $G$ is connected, we see that the neutral connected component $F^0 \subset F$, is mapped isomorphically to $G$ by $\bar{\pi}$. This provides a \emph{canonical} splitting of extension \eqref{eqn seq M & G}. As a consequence, extension \eqref{eqn seq A & G} is the pushout of an extension of group schemes (obtained as a pullback via the canonical splitting)
\[1\to H\to E\to G\to 1.\]
As a subgroup of $\underline{\aut}_X^H(Z)$, it acts on $Z$, and it is immediate to check then that $Z\to X$ is an $E$-torsor, which enjoys the required properties. To conclude, note that the  construction above is canonical. 
\end{proof}

In the previous theorem, the assumptions that $H$ is abelian and that $G$ and $H$ are both connected can be removed when $M$ is the trivial group. In that way, one gets the following result, which implies \cite[Prop.~1.4]{HS}.

\begin{thm}\label{thm empilement abstrait 2}
Let $K$ be a field. Let $X$ be a smooth $K$-scheme. Let $G,H$ be smooth $K$-group schemes. Let $Y\to X$ be a $G$-torsor and let $Z\to Y$ be an $H$-torsor. Assume the following:
\begin{itemize}
    \item[(i)] The class of $Z_{\Omega}\to Y_{\Omega}$ in $H^1(Y_{\Omega},H_{\Omega})$ is $G(\Omega)$-invariant for every separably closed field $\Omega/K$.
    \item[(ii')] The sheaf of sets $\cal M$ over the small smooth site over $K$ associated to the presheaf given by $(W\mapsto H(Y_W)/H(W))$ is trivial.
\end{itemize}
Then there exists a canonical extension of $K$-group schemes
\[\quad 1\to H\to E\to G\to 1,\]
together with a canonical structure of an $E$-torsor on the composite $Z\to X$ such that the following holds.
\begin{itemize}
    \item The action of $E$ on $Z$ extends that of $H$.
    \item The $G$-torsors $Z/H \to X$  and $Y\to X$ are isomorphic.
\end{itemize} 
\end{thm}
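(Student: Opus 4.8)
The plan is to follow the proof of Theorem \ref{thm empilement abstrait 1} \emph{mutatis mutandis}, the crucial simplification being that hypothesis (ii') forces the auxiliary group $M$ appearing there to be trivial, so that the final splitting argument disappears. Concretely, for a $K$-scheme $W$ I would again introduce the sheaf $\underline{\aut}_X^H(Z)$ of $H$-equivariant $X_W$-automorphisms of $Z_W$, together with its subsheaf $\mathcal{A}:=\underline{\aut}_Y^H(Z)$, form the exact sequence
\[ 1\to \mathcal{A}\to \underline{\aut}_X^H(Z)\xrightarrow{\pi}\underline{\aut}_X(Y), \]
and pull it back along the canonical embedding $G\hookrightarrow\underline{\aut}_X(Y)$ coming from the $G$-torsor structure of $Y\to X$, obtaining $1\to\mathcal{A}\to\mathcal{E}'\xrightarrow{\pi}G$. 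As in the proof of Theorem \ref{thm empilement abstrait 1}, the representability result \cite[III.\S1.5]{Giraud} identifies $\mathcal{A}(W)$ with $H(Y_W)$, so that the quotient sheaf $\mathcal{A}/H$ is exactly $\mathcal{M}$. This is the only place where (ii') intervenes: the triviality of $\mathcal{M}$ says precisely that the inclusion $H\hookrightarrow\mathcal{A}$ (which is injective because $Y_W\to W$ is an fppf cover) is an isomorphism of sheaves. Note that this step does \emph{not} use that $H$ is abelian, since $\mathcal M$ is only a sheaf of sets. Thus $\mathcal{A}=H$ already, and I obtain $1\to H\to\mathcal{E}'\xrightarrow{\pi}G$.

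Next I would establish surjectivity of $\pi$. As before, the functor $\mathcal{E}'\subseteq\underline{\aut}_K(Z)$ commutes with filtered direct limits because $X,Z,H,G$ are of finite type, and hypothesis (i) guarantees that for a separably closed field $\Omega/K$ every $g\in G(\Omega)$ lifts to $\mathcal{E}'(\Omega)$, since $Z_\Omega$ and $g^*Z_\Omega$ are isomorphic $H$-torsors over $Y_\Omega$; this provides input (b) of Lemma \ref{lem surjectivite}. The genuinely new point here is that $G$ need no longer be connected, and this is where I expect the most care to be needed. I would handle it by applying the surjectivity conclusion of Lemma \ref{lem surjectivite} to each of the finitely many connected components of $G$ separately: the proof uses connectedness only to single out a generic point and to invoke Zariski-density of $\bar{K}$-points, both of which are available component by component once one feeds in hypothesis (i) for $\Omega=\overline{K(G_i)}$. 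Extracting a finite smooth cover over each component then yields $1\to H\to\mathcal{E}'\xrightarrow{\pi}G\to 1$.

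It remains to prove representability and to conclude, and this is the second place where the relaxed hypotheses must be checked. Since $H$ is smooth of finite type, $F:=H/H^0$ is a \emph{finite} étale group scheme rather than being merely étale-locally isomorphic to $\Z^n$; nevertheless the proof of Lemma \ref{lem representabilite} applies verbatim to $1\to H\to\mathcal{E}'\to G\to 1$ once the appeal to \cite[Exp.~8, Prop.~5.1]{SGA7} is replaced by the easier fact that torsors under a finite étale group scheme are representable, being finite (hence affine) over the base, so that \cite[III, Thm.~4.3.(a)]{Milne} applies; the remaining steps, namely Chevalley's decomposition $1\to L\to H^0\to A\to 1$ and the representability of torsors under the affine $L$ and the abelian variety $A$, are unchanged. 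This produces an exact sequence of $K$-group \emph{schemes} $1\to H\to E\to G\to 1$ with $E=\mathcal{E}'$. Because $\mathcal{A}=H$, i.e.\ $M$ is trivial, the quotient sequence $1\to M\to F\to G\to 1$ of Theorem \ref{thm empilement abstrait 1} degenerates to $F\cong G$, so no splitting step is required and the extension is already the desired one. Finally, $E\subseteq\underline{\aut}_X^H(Z)$ acts on $Z$, and one checks directly that $Z\to X$ is an $E$-torsor extending the $H$-action with $Z/H\cong Y$ over $X$; the construction is manifestly canonical.
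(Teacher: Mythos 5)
There is a genuine gap at the very first step, and it is precisely the point where the paper's proof departs from that of Theorem \ref{thm empilement abstrait 1}. You keep the sheaf $\underline{\aut}_Y^{H}(Z)$ of automorphisms commuting with the right $H$-action, assert that \cite[III.\S1.5]{Giraud} identifies $\mathcal A(W)$ with $H(Y_W)$, and claim that right translation embeds $H$ into $\mathcal A$. Both assertions use that $H$ is abelian, which is exactly the hypothesis Theorem \ref{thm empilement abstrait 2} drops. For a right $H$-torsor $Z\to Y$ with $H$ non-commutative, right translation by $h\in H(W)$ sends $z\cdot h'$ to $zh'h$, not to $zhh'$, so it is \emph{not} $H$-equivariant unless $h$ is central: the inclusion $H\hookrightarrow\mathcal A$ you invoke does not exist. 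Moreover, what Giraud's result actually says is that $\underline{\aut}_{Y}^{H}(Z)$ is the \emph{inner twist} $H'={}_Z H$ of $H_Y$ by the torsor $Z$, a $Y$-group scheme that coincides with $H_Y$ only when $H$ is abelian (or the torsor is trivial). So with your definitions the kernel of $\pi$ is a form of $H$ over $Y$ with no given descent to $K$; hypothesis (ii'), which concerns $H(Y_W)/H(W)$ and not sections of the twist, does not apply to it; and even if it did, you would obtain an extension of $G$ by some inner form of $H$ rather than by $H$ itself, with the wrong action on $Z$. The paper's fix is to swap the roles of the two actions: it considers $\underline{\aut}_X^{H'}(Z)$ and $\underline{\aut}_Y^{H'}(Z)$, the automorphisms compatible with the \emph{left} action of the twisted group $H'$; by the bitorsor (double-centralizer) property one then gets $\aut_{Y_W}^{H'}(Z_W)=H(Y_W)$, with $H$ sitting inside via right translation, and (ii') collapses this kernel to $H$. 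Your misdiagnosis is visible in the sentence claiming that the identification of $\mathcal A/H$ with $\mathcal M$ ``does not use that $H$ is abelian'': commutativity is needed not to form the quotient but to compute $\mathcal A$ itself.

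The rest of your argument is sound and in two places more careful than the paper's own half-page proof: the component-by-component application of Lemma \ref{lem surjectivite} to a possibly non-connected $G$ (on which the paper is silent), and the observation that for smooth $H$ of finite type the group $F=H/H^0$ is finite \'etale rather than locally isomorphic to $\Z^n$, so the appeal to \cite[Exp.~8, Prop.~5.1]{SGA7} inside Lemma \ref{lem representabilite} should be replaced by the representability of torsors under finite \'etale (hence affine) group schemes. But these refinements do not repair the first step.
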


\begin{proof}
The proof starts exactly as the one above, except for the following modification. Instead of considering the groups $\aut_{X_W}^{H}(Z_W)$ and $\aut_{Y_W}^{H}(Z_W)$ for smooth $W\to \mathrm{Spec}(K)$ and the corresponding sheaves $\underline{\aut}_X^{H}(Z)$ and $\underline{\aut}_Y^{H}(Z)$, we consider the groups $\aut_{X_W}^{H'}(Z_W)$ and $\aut_{Y_W}^{H'}(Z_W)$ and the corresponding sheaves $\underline{\aut}_X^{H'}(Z)$ and $\underline{\aut}_Y^{H'}(Z)$, where $H'$ is the $Y$-group scheme obtained by twisting $H_Y$ by the torsor $Z\to Y$ ($H'$ is actually $H_Y$ when $H$ is abelian). This group scheme acts naturally on $Z$ on the left compatibly with the right action of $H$ (cf.~\cite[III.\S1.5]{Giraud}). In particular, we still have the equality $\aut_{Y_W}^{H'}(Z_W)=H(Y_W)$ by \textit{loc.~cit.} and an exact sequence
\[1\to \cal A'\to \cal E\xrightarrow{\pi} G\to 1,\]
with $\cal A'=\underline{\aut}_{Y}^{H'}(Z)$, where the surjectivity of $\pi$ is given once again by Lemma \ref{lem surjectivite}. The assumption (ii') on the sheaf $\cal M$ tells us then that $\cal A'$ is actually $H$, and hence the exact sequence becomes
\[1\to H\to \cal E\to G\to 1.\]
Thus $\cal E$ is an $H$-torsor, which is then representable by a $K$-group scheme $E$ by Lemma \ref{lem representabilite}. And again, since $E$ is by definition a subgroup of $\underline{\aut}_X^H(Z)$, it is immediate to check that $E$ acts on $Z$ and that $Z\to X$ is an $E$-torsor. The fact that the construction is canonical is once again easy to see.
\end{proof}

\begin{rem}
As a referee pointed out, the proofs of Theorems \ref{thm empilement abstrait 1} and \ref{thm empilement abstrait 2} do not use the fact that $Y\to X$ is a $G$-torsor, but rather that $G$ acts on the $X$-scheme $Y$. One may extend thus the statements of both theorems to a more general setting (for instance, one can consider projective bundles or Severi-Brauer schemes over $X$, which have natural actions by forms of $\mathrm{PGL}_n$). However, we were not able to come up with new applications in this setting.
\end{rem}

In Theorems \ref{thm empilement abstrait 1} and \ref{thm empilement abstrait 2}, the $G(\Omega)$-invariance of the $H_{\Omega}$-torsor $Z_{\Omega}\to Y_{\Omega}$ is, in a wide variety of cases, a strictly necessary hypothesis in order to get a positive answer to Question \ref{ques}. More precisely:

\begin{prop}\label{prop inv necessaire}
Let $K$ be an algebraically closed field of characteristic $0$. Let
\[1\to H\to E\to G\to 1,\]
be an extension of smooth $K$-group schemes with $G$ connected. Assume that the unipotent radical of $H$ is trivial. Let $X$ be a smooth $K$-scheme, let $Z\to X$ be an $E$-torsor and let $Y:=Z/H$, so that $Z\to Y$ is an $H$-torsor and $Y\to X$ is a $G$-torsor. Then the class of $Z\to Y$ in $H^1(Y,H)$ is $G(K)$-invariant.
\end{prop}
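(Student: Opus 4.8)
The plan is to reduce the statement to two classical facts: that for a reductive $H$ conjugation by a $K$-point of $E$ is an \emph{inner} automorphism of $H$, and that inner automorphisms by rational points act trivially on non-abelian $H^1$. First I would fix $g\in G(K)$ and make the invariance explicit: writing $t_g\colon Y\to Y$ for the automorphism given by the action of $g$ coming from the $G$-torsor structure $Y\to X$, I must show that $t_g^*[Z]=[Z]$ in $H^1(Y,H)$. Since $K$ is algebraically closed and $E\to G$ is a smooth surjection (it is an $H$-torsor with $H$ smooth), the map $E(K)\to G(K)$ is onto, so I may choose a lift $e\in E(K)$ of $g$.

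Next I would exploit that $e$ acts on the total space $Z$. Let $R_e\colon Z\to Z$ be right translation by $e$ under the $E$-torsor structure $Z\to X$. Because $H\trianglelefteq E$, for $z\in Z$ and $h\in H$ one has $R_e(z\cdot h)=z\cdot h\cdot e=R_e(z)\cdot(e^{-1}he)$; hence $R_e$ descends to $Y=Z/H$, and the induced automorphism is precisely $t_g$. Viewing $R_e$ through the universal property of the pullback, it yields an isomorphism of $H$-torsors over $Y$ between $Z$ and $t_g^*Z$ which is equivariant for the automorphism of $H$ given by $h\mapsto e^{-1}he$. In cohomological terms this reads $t_g^*[Z]=c_*[Z]$, where $c_*$ is induced by this conjugation automorphism $c$ attached to $e$. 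Thus I am reduced to showing that $c_*$ fixes $[Z]$, for which it suffices that $c$ be conjugation by an element of $H(K)$, since inner automorphisms by rational points act trivially on $H^1$.

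To see that $c$ is inner, I would consider the conjugation homomorphism $\gamma\colon E\to\underline{\aut}(H)$ of $K$-group schemes (so that $c=\gamma(e)$ up to the harmless replacement $e\leftrightarrow e^{-1}$). As the unipotent radical of $H$ is trivial and $\mathrm{char}\,K=0$, the group $H$ is (possibly disconnected) reductive, and the outer automorphism group scheme $\underline{\out}(H)=\underline{\aut}(H)/\underline{\inn}(H)$ is finite étale. Restricting $\gamma$ to $H$ lands in $\underline{\inn}(H)$, so $\gamma$ descends to a homomorphism $\bar\gamma\colon G=E/H\to\underline{\out}(H)$. Since $G$ is connected and $\underline{\out}(H)$ is discrete, $\bar\gamma$ is trivial, whence $c=\gamma(e)\in\underline{\inn}(H)(K)$. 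Finally, as $K$ is algebraically closed, $H^1(K,Z(H))=0$, so $H(K)\twoheadrightarrow\underline{\inn}(H)(K)$ and $c$ is indeed conjugation by some $h_0\in H(K)$. Then $c_*[Z]=[Z]$, giving $t_g^*[Z]=[Z]$, as required.

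The main obstacle I anticipate is the structural input that $\underline{\out}(H)$ is étale, equivalently that the identity component of $\underline{\aut}(H)$ consists of inner automorphisms, in the possibly disconnected reductive case; for connected reductive $H$ this is standard, and I would either reduce to that case or invoke the corresponding statement for groups with reductive identity component and finite component group. A secondary, purely bookkeeping point is keeping the conjugation and torsor-twisting conventions consistent (whether one obtains $c$ or $c^{-1}$), but this is immaterial since both are inner by the same reasoning.
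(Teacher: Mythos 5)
Your overall strategy coincides with the paper's: lift $g$ to $e\in E(K)$, observe that translation by $e$ descends to $t_g$ on $Y=Z/H$ and identifies $Z$ with $t_g^*Z$ up to the conjugation automorphism of $H$ attached to $e$, and thereby reduce everything to showing that the outer action of $G$ on $H$ is trivial (the paper packages this as: the centralizer $C$ of $H$ in $E$ surjects onto $G$, so that $g$ admits a lift commuting with $H$ and the resulting $Y$-morphism $Z\to g^*Z$ is $H$-equivariant on the nose). That part of your argument, including the observation that conjugation by an element of $H(K)$ acts trivially on $H^1(Y,H)$, is sound.

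The gap is in your justification of the key step. From ``the unipotent radical of $H$ is trivial and $\mathrm{char}\,K=0$'' you conclude that $H$ is (possibly disconnected) reductive, hence affine with $\underline{\out}(H)$ finite \'etale. But the proposition does not assume $H$ linear: by Chevalley's theorem the hypothesis only yields an extension $1\to L\to H\to A\to 1$ with $L$ reductive linear and $A$ an abelian variety, and the non-affine case is genuinely in scope (it is precisely the case relevant to the comparison with homogeneous bundles over abelian varieties, and more importantly Example 5.1 relies on this proposition with $G$ an elliptic curve --- though there $H=\gm$ is reductive, the statement is used and proved in full generality). Your connectedness-versus-discreteness argument could probably be pushed through --- rigidity makes the automorphism functor of $A$ unramified, and $\aut(H)$ embeds into $\aut(L)\times\aut(A)$ because $L$ is characteristic in $H$ and morphisms from an abelian variety to a linear group are constant --- but as written your proof covers only affine $H$, and it also leans on representability of $\underline{\aut}(H)$ and finiteness of $\underline{\out}(H)$ for disconnected reductive $H$, which you flag but do not resolve. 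The paper sidesteps both issues by arguing with abstract groups: $G(K)$ is spanned by its divisible subgroups (Lemma \ref{spandiv}), while $\out(H)$ has no nontrivial infinitely divisible elements ($\out(L)$ is finite and $\out(A)$ embeds into $\gln(\Z)$), so the homomorphism $G(K)\to\out(H)$ is automatically trivial. Substituting that divisibility argument for your \'etale one repairs the proof; the remainder goes through unchanged.
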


\begin{proof}
Define $C$ to be the centralizer of $H$ in $E$. We claim that $C$ surjects onto $G$ via the projection. Since $C$ is the kernel of the natural arrow $E(K)\to\aut(H)$ given by conjugation, the claim amounts to proving that the induced morphism $G(K)\to\out(H)$ is trivial, where $\out(H):=\aut(H)/\inn(H)$.

By Lemma \ref{spandiv}, which we prove in the following section and uses the hypothesis on the characteristic of $K$, we know that $G(K)$ is generated by its infinitely divisible elements, while $\out(H)$ has no such elements. Indeed, this group is finite for reductive groups (cf.~\cite[Thm.~5.2.3]{Demazure}), while it is a subgroup of $\gln(\Z)$ for abelian varieties (as follows from ~\cite[Thm.~10.15]{MilneAV}). In the general case, our hypothesis on $H$ and Chevalley's Theorem (cf.~\cite[Thm.~1.1]{ConradChevalley}) ensure that $H$ is an extension
\[1\to L\to H\to A\to 1,\]
of an abelian variety $A$ and a reductive linear group $L$. Since $L$ is a characteristic subgroup of $H$ and scheme morphisms from an abelian variety to a linear group are constant, one easily sees that $\aut(H)$ is isomorphic to a subgroup of $\aut(L)\times\aut(A)$. We deduce the same property for $\out(H)$, which implies the claim.

Now consider $g\in G(K)$ and let us prove that the torsor $g^*Z\to Y$, defined as the left vertical arrow of the fiber product
\[\xymatrix{
g^*Z \ar[r] \ar[d] & Z \ar[d] \\
Y \ar[r]^{g} & Y,
}\]
is isomorphic to the torsor $Z\to Y$. Let $c\in C(K)\subset E(K)$ be a preimage of $g$. Then we have a commutative square
\[\xymatrix{
Z \ar[d] \ar[r]^c & Z \ar[d] \\
Y \ar[r]^g & Y.
}\]
Then, by the universal property of the fiber product, we get a $Y$-morphism $Z\to g^*Z$, which we claim it is $H$-equivariant. This is a straightforward computation that uses the fact that $c\in C(K)$ commutes with $H$. This proves that the class of $Z\to Y$ is $g$-invariant and hence $G(K)$-invariant.
\end{proof}

\begin{rem}\label{rem HS}
A result similar to Theorem \ref{thm empilement abstrait 2} can be found in \cite[Thm.~1.2]{HS}. However, the assumptions are slightly different:

Harari and Skorobogatov assume that every morphism $Z_{\bar K}\to H_{\bar K}$ is trivial. This is easily seen to imply the triviality of $\cal M$ and hence our assumption (ii').

On the other hand, they assume that every automorphism of $Y_{\bar K}$ given by an element $g\in G(\bar K)$ can be lifted to an automorphism of $Z_{\bar K}$. Our assumption (i) implies this, of course, but it is not clear whether they are equivalent assumptions, even though ours seems to be always necessary, as it can be seen from Proposition \ref{prop inv necessaire}.

In any case, assumption (ii') of Theorem \ref{thm empilement abstrait 2} is met for instance when $H$ is affine, $G$ is anti-affine and $X$ is connected and proper. Indeed, in this case $\cal O(Y)=K$ and hence $H(Y\times W)=H(W)$ for geometrically integral $W$ by \cite[Lem.~5.2]{BrionHom}. This implies the triviality of $\cal M$. These are milder hypotheses than those considered by Harari and Skorobogatov in \cite[Prop.~1.4]{HS}, who deal for instance with the case of $H$ of multiplicative type and $Y$ proper.
\end{rem}

\section{Proof of Theorem \ref{thm principal}}\label{sec proof}

It will suffice to prove that the assumptions (i) and (ii) of Theorem \ref{thm empilement abstrait 1} are met under each of the hypotheses of Theorem \ref{thm principal}. We fix then a field $K$ of characteristic $0$ and keep the other notations as above: $X$ is a connected smooth $K$-scheme; $G,H$ are smooth connected $K$-group schemes; $Y\to X$ is a $G$-torsor and $Z\to Y$ is an $H$-torsor; $\cal M$ is the sheaf over the small smooth site over $K$ associated to the presheaf given by $(W\mapsto H(Y_W)/H(W))$.\\

We prove (ii) first. By \'etale descent, we may assume that $K$ is algebraically closed and we need to prove that $\cal M$ is representable and isomorphic to $\Z^n$. This is a direct consequence of a result of Rosenlicht, which we prove in the appendix in the context of separably closed fields (cf.~Lemma \ref{RosenLem}).

We are then left with the proof of (i), which is clearly implied by the following result.

\begin{prop}\label{prop action triviale}
Let $K$ be an algebraically closed field of characteristic $0$. Let $G$ and $H$ be smooth connected $K$-algebraic groups and make one of the following assumptions:
\begin{itemize}
    \item[(a)] $H$ is an abelian variety.
    \item[(b)] $G$ is linear and $H$ is a semi-abelian variety.
\end{itemize}
Let $X$ be a smooth $K$-scheme and let $Y\to X$ be a $G$-torsor. Then the action of $G(K)$ on $H^1(Y,H)$ is trivial.
\end{prop}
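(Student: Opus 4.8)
The plan is to reformulate the statement as the vanishing of a single ``variation class'' and then to kill that class by a seesaw/rigidity argument. Fix $\xi\in H^1(Y,H)$ and $g\in G(K)$; since $H$ is commutative (being an abelian or semi-abelian variety) the pullback $t_g^*$ along the translation $t_g\colon Y\to Y$ is a homomorphism of $H^1(Y,H)$, and the goal is to show $t_g^*\xi=\xi$. Denoting by $a,p_2\colon G\times Y\to Y$ the action and the projection, I would introduce $\mathcal D:=a^*\xi-p_2^*\xi\in H^1(G\times Y,H)$, which restricts to $0$ on $\{e\}\times Y$ and to $t_g^*\xi-\xi$ on $\{g\}\times Y$. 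Because $Y\to X$ is a torsor, the shear isomorphism $Y\times_X Y\xrightarrow{\sim}G\times Y$ identifies $\mathcal D$ with $\mathrm{pr}_1^*\xi-\mathrm{pr}_2^*\xi$ and the slices $\{g\}\times Y$ with the graphs of the $t_g$; in particular $\mathcal D$ is a $1$-cocycle for the groupoid $Y\times_X Y\rightrightarrows Y$, so that $g\mapsto t_g^*\xi-\xi$ is a crossed homomorphism $G(K)\to H^1(Y,H)$. The entire problem is thus to show this crossed homomorphism is identically zero.

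Next I would subtract the ``pure $G$'' contribution. Writing $\xi_0\in H^1(G,H)$ for the restriction of $\xi$ to a fibre $Y_{x_0}\cong G$, the class $\mathcal D-p_1^*\xi_0$ vanishes on both $\{e\}\times Y$ and $G\times\{y_0\}$, i.e.\ it is a pure \emph{correlation} class; and since $p_1^*\xi_0$ restricts trivially to every slice (its restriction lands in $H^1(\spec K,H)=0$), it suffices to prove that such correlation classes vanish on all slices. Via the theorem of the cube/square for the $G$-action, the resulting map $\Psi_\xi\colon g\mapsto(\mathcal D-p_1^*\xi_0)|_{\{g\}\times Y}$ becomes a genuine homomorphism $G\to H^1(Y,H)$ of group functors, with connected image factoring through $\mathrm{Alb}(G)$ in the abelian-variety directions. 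This is the step where I expect to lean on Rosenlicht's theorem (Lemma \ref{RosenLem}), exactly as for hypothesis (ii): the Leray sequence for $p_1\colon G\times Y\to G$, together with Rosenlicht's splitting of $\mathrm{Mor}(G\times Y,H)$, controls $p_{1*}H$ — whose ``discrete part'' is the very sheaf $\mathcal M\cong\Z^n$ — and so pins down the correlation term.

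I would then dévisser the semi-abelian $H$ through $1\to T\to H\to A\to 1$ with $T$ a torus and $A$ an abelian variety, and use the $G(K)$-equivariance of the associated long exact cohomology sequence to reduce to the coefficient groups $T$ and $A$ separately. For abelian-variety coefficients $A$ — covering case (a) for arbitrary $G$ as well as the abelian part of case (b) — the mechanism is properness: since $A$ is proper, a rigidity argument in the spirit of the classical rigidity lemma for abelian varieties should force the family $a^*\xi$ to be constant along the connected $G$, so that $\Psi_\xi\equiv 0$ regardless of the nature of $G$. For torus coefficients $T$ — needed only in case (b), where $G$ is linear — one has that $H^1(Y,T)$ is a sum of copies of $\pic(Y)$ indexed by the cocharacters of $T$, the homomorphism $\Psi_\xi$ lands in the semi-abelian part built from $\pic^0_{Y/K}$, and I would invoke the linearity of $G$ (so that its abelian Albanese vanishes and $\pic(G)$ is finite) together with restriction to fibres $\cong G$ to conclude.

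The genuine difficulty is concentrated in the seesaw/rigidity step of the two previous paragraphs: proving that correlation classes in $H^1(G\times Y,H)$ vanish on slices, and correctly matching this vanishing to the hypotheses. The subtle point is that the two cases vanish for \emph{different} reasons — linearity of $G$ (through $\mathrm{Alb}(G)=0$) in case (b) versus properness of the coefficient abelian variety in case (a), the latter being indispensable precisely because $\mathrm{Alb}(G)$ may be non-zero when $G$ is itself an abelian variety. The contrast with the negative entries of Table \ref{table} — for instance torus coefficients with $G$ an abelian variety, where the Poincaré bundle produces a non-zero correlation — shows that no uniform argument ignoring the type of $H$ can succeed, so the crux is to isolate exactly the part of $H^1(Y,H)$ that $\Psi_\xi$ can reach and to prove it rigid under each of the two hypotheses.
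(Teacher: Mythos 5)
Your reformulation via the difference class $\mathcal D=a^*\xi-p_2^*\xi$ and its restrictions to slices is a reasonable repackaging of the problem, but the two steps where you yourself locate ``the genuine difficulty'' are exactly where the argument breaks down, and the mechanisms you propose there are not the ones that work. First, for abelian-variety coefficients (all of case (a), and the quotient $A$ in case (b)) you appeal to ``a rigidity argument in the spirit of the classical rigidity lemma'' driven by the properness of $A$. The rigidity lemma needs a proper factor in the \emph{source} of the family; here neither $G$ nor $Y$ is assumed proper, and properness of the coefficient group does not make classes in $H^1(Y,A)$ rigid in families --- $H^1(Y,A)$ is a torsion Weil--Ch\^atelet-type group, not a scheme, so phrases like ``connected image factoring through $\mathrm{Alb}(G)$'' and the appeal to the theorem of the cube do not typecheck in this setting. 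The property that actually does the work, and which your sketch never invokes, is that $H^1(Y,A)$ is \emph{torsion} (Raynaud): every class lifts to $H^1(Y,A[n])$ for some $n$, and $G(K)$-invariance for finite coefficients is then \cite[Thm.~5.2]{BD}. That is the paper's entire proof of (a), and properness alone is not a substitute.

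Second, and more seriously, your d\'evissage ``use the $G(K)$-equivariance of the long exact sequence to reduce to $T$ and $A$ separately'' is not a reduction. Triviality of the action on $N:=\im\bigl(H^1(Y,T)\to H^1(Y,H)\bigr)$ and on $M:=\im\bigl(H^1(Y,H)\to H^1(Y,A)\bigr)$ only says the action is unipotent with respect to the filtration $0\to N\to H^1(Y,H)\to M\to 0$; the remaining obstruction is a homomorphism $G(K)\to\hom(M,N)$ which has no a priori reason to vanish --- this failure mode is precisely what the negative entries of Table \ref{table} exploit. Killing it requires a further argument: the paper shows $M$ is torsion of cofinite type and $N_{\tor}$ is of cofinite type, so $\hom(M,N)=\hom(M,N_{\tor})$ is profinite, while $G(K)$ is generated by its divisible subgroups in characteristic $0$, and any homomorphism from such a group to a profinite group is trivial. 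Your proposal contains no substitute for this step. (For the torus part itself, the clean input is Sumihiro's theorem that a connected linear group acts trivially on $\pic(Y)$, rather than an Albanese argument on the possibly non-proper $Y$.)
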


\begin{proof}[Proof of Proposition \ref{prop action triviale}.(a)]
According to \cite[Cor.~XIII.2.4, Prop.~XIII.2.6]{raynaud}, the group $H^1(Y,H)$ is torsion. Hence, given an element $a\in H^1(Y,H)$, it comes from $H^1(Y,H[n])$ for some $n>0$. By \cite[Thm.~5.2]{BD}, we deduce that $a$ is fixed by $G(K)$.
\end{proof}

We prove now Proposition \ref{prop action triviale}.(b). For that purpose, we first need to prove some lemmas on the structure of the groups involved. In all of them, we keep the notations of Proposition \ref{prop action triviale}.

\begin{lem}\label{tcof}
Let $A$ be an abelian variety over $K$. Then the group $H^1(Y,A)$ is torsion of cofinite type, i.e. its $m$-torsion subgroup is finite for every $m\in\N$.
\end{lem}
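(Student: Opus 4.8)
The plan is to prove the two assertions packaged in ``torsion of cofinite type'' separately: first that $H^1(Y,A)$ is a torsion group, and then that its $m$-torsion subgroup $H^1(Y,A)[m]$ is finite for every $m\in\N$. The torsion statement is immediate from the references already invoked in part (a): since $A$ is an abelian variety, \cite[Cor.~XIII.2.4, Prop.~XIII.2.6]{raynaud} gives that $H^1(Y,A)$ is torsion. So the real content lies in the finiteness of the $m$-torsion, and the main tool will be the Kummer sequence combined with finiteness of \'etale cohomology with finite coefficients. I would first record that $Y$, being a $G$-torsor over the smooth finite-type $K$-scheme $X$ with $G$ smooth, is itself smooth and of finite type over $K$; this is the geometric input needed at the end.

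For the $m$-torsion, I would use that $\mathrm{char}(K)=0$, so that the multiplication-by-$m$ isogeny $[m]\colon A\to A$ is finite \'etale and yields a short exact sequence of \'etale sheaves on $Y$
\[0\to A[m]\to A\xrightarrow{[m]} A\to 0.\]
Passing to the associated long exact cohomology sequence, the image of $H^1(Y,A[m])\to H^1(Y,A)$ equals the kernel of $[m]$ acting on $H^1(Y,A)$, namely $H^1(Y,A)[m]$; thus $H^1(Y,A)[m]$ is a quotient of $H^1(Y,A[m])$, and it suffices to prove the latter group is finite. Here I would exploit that $K$ is algebraically closed of characteristic $0$, so that the finite group scheme $A[m]$ is constant, isomorphic to $(\Z/m\Z)^{2g}$ with $g=\dim A$. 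Hence $H^1(Y,A[m])\cong H^1(Y,\Z/m\Z)^{2g}$, and the problem reduces to the finiteness of the \'etale cohomology group $H^1(Y,\Z/m\Z)$ with finite constant coefficients.

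This last finiteness is the only nontrivial step, and I expect it to be the main obstacle. For $H^1$ it can be handled cleanly: decomposing the finite-type $K$-scheme $Y$ into its finitely many connected components and choosing a geometric base point on each, one has $H^1(Y,\Z/m\Z)\cong\hom_{\mathrm{cont}}(\pi_1^{\text{\'et}}(Y),\Z/m\Z)$ componentwise, so finiteness follows from the topological finite generation of the \'etale fundamental group of a variety of finite type over an algebraically closed field of characteristic $0$. The latter I would justify by the Lefschetz principle, reducing to $K=\C$, where $\pi_1^{\text{\'et}}$ is the profinite completion of the topological fundamental group of the analytification, which is finitely generated since a complex variety has the homotopy type of a finite CW-complex; alternatively one may simply invoke the general finiteness theorem for \'etale cohomology with torsion coefficients of order invertible in $K$. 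Combining this with the Kummer reduction shows that $H^1(Y,A)[m]$ is finite for each $m$, which together with the torsion statement completes the proof.
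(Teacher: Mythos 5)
Your proposal is correct and follows essentially the same route as the paper: torsion of $H^1(Y,A)$ from Raynaud, then the Kummer sequence exhibiting $H^1(Y,A)[m]$ as a quotient of $H^1(Y,A[m])$, whose finiteness the paper simply cites from the finiteness theorem for \'etale cohomology with finite coefficients (SGA~4, Exp.~XVI, Thm.~5.2) --- exactly the ``alternative'' you mention at the end. Your extra unpacking via the constancy of $A[m]$ and the topological finite generation of $\pi_1^{\mathrm{\acute et}}$ is a valid but unnecessary elaboration of that citation.
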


\begin{proof}
As it was already noted in the proof of Proposition \ref{prop action triviale}.(a), the group $H^1(Y,A)$ is torsion. Moreover, by \cite[Exp.~XVI, Thm.~5.2]{SGA4}, for each integer $n>0$, the group $H^1(Y,A[n])$ is finite, and hence so is its quotient $H^1(Y,A)[n]$.
\end{proof}

\begin{lem}\label{spandiv}
The group $G(K)$ is spanned by its divisible subgroups.
\end{lem}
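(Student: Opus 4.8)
The plan is to reduce the statement to two well-understood building blocks by means of the structure theory of connected algebraic groups. Recall that over a field a connected algebraic group $G$ admits a largest connected affine normal subgroup $G_{\mathrm{aff}}$ (the kernel of Chevalley's morphism, so that $G/G_{\mathrm{aff}}$ is an abelian variety $A$) and a largest anti-affine subgroup $G_{\mathrm{ant}}$, which is central and commutative, and one has $G=G_{\mathrm{aff}}\cdot G_{\mathrm{ant}}$ (Brion). Since $K$ is algebraically closed, a dominant morphism of finite-type $K$-schemes is surjective on $K$-points, so the multiplication morphism $G_{\mathrm{aff}}\times G_{\mathrm{ant}}\to G$, being surjective because $G=G_{\mathrm{aff}}\cdot G_{\mathrm{ant}}$, yields $G(K)=G_{\mathrm{aff}}(K)\cdot G_{\mathrm{ant}}(K)$. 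Thus it suffices to show that each of $G_{\mathrm{aff}}(K)$ and $G_{\mathrm{ant}}(K)$ is generated by divisible subgroups; the subgroup of $G(K)$ that they generate will then contain the product, i.e.\ all of $G(K)$.

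For the anti-affine factor I would first record the following general fact, valid in characteristic $0$: if $C$ is a connected commutative $K$-group scheme, then $C(K)$ is divisible. Indeed, for $n\geq 1$ the multiplication-by-$n$ map $[n]\colon C\to C$ is a group endomorphism whose differential at the identity is multiplication by $n$ on $\mathrm{Lie}(C)$, hence an isomorphism; therefore $[n]$ is étale, its image is a closed connected subgroup of full dimension, so $[n]$ is surjective, and thus surjective on $K$-points as $K=\bar K$. Applied to the commutative connected group $G_{\mathrm{ant}}$, this shows that $G_{\mathrm{ant}}(K)$ is divisible, hence is itself a divisible subgroup. (The same fact re-proves, in passing, that $A(K)$ is divisible for an abelian variety $A$.)

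For the affine factor $L:=G_{\mathrm{aff}}$, a connected linear algebraic group, I would use that over an algebraically closed field every element of $L$ lies in a Borel subgroup, so $L(K)=\bigcup_B B(K)$. Each Borel decomposes as $B=T\ltimes U$ with $T$ a maximal torus and $U=R_u(B)$ unipotent. Here $T(K)\cong(K^\times)^{r}$ is divisible, and in characteristic $0$ the group $U(K)$ is divisible as well, being a successive central extension of copies of $\ga$ whose $K$-points $(K,+)$ form a $\Q$-vector space (equivalently, $U(K)$ is a radicable nilpotent group via $\exp/\log$). Consequently each $B(K)$ is generated by the two divisible subgroups $T(K)$ and $U(K)$, and therefore $L(K)$ is generated by divisible subgroups. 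Combining this with the previous paragraph finishes the argument.

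The main obstacle is not either building block taken separately, but the way they are glued: the Chevalley extension $1\to L\to G\to A\to 1$ need not split (already for semi-abelian varieties), so one cannot simply lift a divisible subgroup of $A(K)$ through it. The decomposition $G=G_{\mathrm{aff}}\cdot G_{\mathrm{ant}}$ is precisely what bypasses this difficulty, replacing the possibly non-split quotient $A$ by the honest central subgroup $G_{\mathrm{ant}}$, whose $K$-points we control directly. The remaining care is to ensure that every passage from group schemes to $K$-points (surjectivity of the multiplication map and of $[n]$, and membership in Borel subgroups) is licensed by $K$ being algebraically closed of characteristic $0$, which is where the hypotheses are genuinely used.
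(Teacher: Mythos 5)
Your proof is correct and follows essentially the same route as the paper: both rest on the decomposition $G=G_{\mathrm{aff}}\cdot G_{\mathrm{ant}}$ of Brion--Samuel--Uma, the divisibility of $G_{\mathrm{ant}}(K)$ as a connected commutative group in characteristic $0$, and a reduction of the affine part to tori and unipotent subgroups. The only (immaterial) difference is that the paper writes each element of $G_{\mathrm{aff}}(K)$ as a semisimple element times elements of $\mathbb{G}_{\mathrm{a}}$-subgroups, whereas you pass through Borel subgroups $B=T\ltimes U$.
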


\begin{proof}
Write $G=G_{\mathrm{aff}}G_{\mathrm{ant}}$ where $G_{\mathrm{aff}}$ is the largest connected affine subgroup of $G$ and $G_{\mathrm{ant}}$ is the largest anti-affine subgroup of $G$ (cf.~\cite[Thm.~1.2.4]{BrionSamuelUma}). Every element $g$ of $G_{\mathrm{aff}}(K)$ can be written as:
$$g=su_1...u_r$$
where $s$ is a semisimple element of $G_{\mathrm{aff}}(K)$ and each $u_i$ is contained in a subgroup of $G_{\mathrm{aff}}$ isomorphic to $\mathbb{G}_\mathrm{a}$. Hence $G_{\mathrm{aff}}(K)$ is spanned by its divisible subgroups. Moreover, the anti-affine group $G_{\mathrm{ant}}$ is connected commutative (cf.~\cite[Thm.~1.2.1]{BrionSamuelUma}), and hence $G_{\mathrm{ant}}(K)$ is a divisible group. We deduce that $G(K)$ is spanned by its divisible subgroups.
\end{proof}

\begin{lem}\label{nodiv}
 Let $\Gamma$ be a profinite group. Then, $\Gamma$ has no non-trivial infinitely divisible elements.
\end{lem}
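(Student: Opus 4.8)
The plan is to exploit the one structural feature of a profinite group that we need, namely that $\Gamma$ is Hausdorff with $\bigcap_N N = \{1\}$ as $N$ ranges over the open normal subgroups of finite index. Thus a nontrivial element survives in at least one finite quotient, whereas infinite divisibility will force any element to become trivial in \emph{every} finite quotient; these two facts are incompatible for a nontrivial element.

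First I would fix the definition used implicitly in the previous results: an element $g\in\Gamma$ is \emph{infinitely divisible} if for every integer $n\geq 1$ there exists $h\in\Gamma$ with $h^n=g$. Then I would argue by contradiction, assuming that some infinitely divisible $g\in\Gamma$ satisfies $g\neq 1$.

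Since $\Gamma$ is profinite, it admits a neighborhood basis of the identity consisting of open normal subgroups $N$ with $\Gamma/N$ finite, and the intersection of all such $N$ is trivial. As $g\neq 1$, there is therefore at least one such $N$ with $g\notin N$; equivalently, the image $\bar g$ of $g$ in the finite group $Q:=\Gamma/N$ is nontrivial. Now set $m:=|Q|$. By infinite divisibility I may choose $h\in\Gamma$ with $h^m=g$, and passing to $Q$, Lagrange's theorem gives $\bar h^{\,m}=1$ (the order of $\bar h$ divides $|Q|=m$). Hence $\bar g=\overline{h^m}=\bar h^{\,m}=1$, contradicting $\bar g\neq 1$. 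This shows that no nontrivial element of $\Gamma$ can be infinitely divisible, which is the assertion.

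I do not expect any genuine obstacle in this argument: it is elementary once the right finite quotient is chosen. The only point deserving a word of care is the separation property of the profinite topology, i.e.\ that the canonical map $\Gamma\hookrightarrow\varprojlim_N \Gamma/N$ is injective with $\bigcap_N N=\{1\}$; this is exactly what guarantees that a nontrivial $g$ is detected in some finite quotient, and it is precisely there that we use that $\Gamma$ is profinite rather than an arbitrary group.
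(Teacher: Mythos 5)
Your argument is correct and is essentially the paper's own proof written out in full: the paper observes that the claim is clear for finite groups (your Lagrange step) and then passes to inverse limits (your separation step, that a nontrivial element is detected in some finite quotient). No issues.
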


\begin{proof}
The statement is clear if $\Gamma$ is finite. It thus also holds for inverse limits of finite groups.
\end{proof}

\begin{proof}[Proof of Proposition \ref{prop action triviale}.(b)]
We have an exact sequence:
$$0 \rightarrow T \rightarrow H \rightarrow A\rightarrow 0,$$
where $T$ is a torus and $A$ is an abelian variety. It induces a cohomology exact sequence:
$$H^1(Y,T) \xrightarrow[]{f} H^1(Y,H) \xrightarrow[]{g} H^1(Y,A),$$
whose arrows are clearly $G(K)$-equivariant since the action is on $Y$. Put $M:=\mathrm{im}(g)$ and $N:=\mathrm{im}(f)$, so that we have the exact sequence of $G(K)$-groups
\[0\to N\to H^1(Y,H)\to M\to 0.\]
By Lemma \ref{tcof}, $H^1(Y,A)$ is torsion of cofinite type, and hence so is $M$. Moreover, by Proposition \ref{prop action triviale}.(a), the group $G(K)$ acts trivially on $H^1(Y,A)$, and hence on $M$. On the other hand, note that $H^1(Y,T)\cong \mathrm{Pic}(Y)^{\dim(T)}$. Since $G$ is linear, a result of Sumihiro (cf.~\cite[Thm.~5.2.1]{BrionLin}) tells us that the action of $G(K)$ on $H^1(Y,T)$ is trivial, hence also its action on $N$.

Thus, the action of $G(K)$ on $H^1(Y,H)$ corresponds to a morphism from $G(K)$ to $\mathrm{Hom}(M,N)$. The abelian group $M$ is torsion, and hence $\mathrm{Hom}(M,N)=\mathrm{Hom}(M,N_{\mathrm{tors}})$. Moreover, $M$ is of cofinite type, and so is the group $N_{\mathrm{tors}}$ since it is isomorphic to a quotient of $\mathrm{Pic}(Y)^{\dim(T)}$. We can therefore write:
\begin{gather*}
    M \cong \bigoplus_p \left(F_p \oplus (\mathbb{Q}_p/\mathbb{Z}_p)^{r_p}\right),\\
    N_{\mathrm{tors}} \cong \bigoplus_p \left(F'_p \oplus (\mathbb{Q}_p/\mathbb{Z}_p)^{r'_p}\right),
\end{gather*}
where $p$ runs through the set of all prime numbers, $F_p$ and $F_p'$ are finite abelian $p$-groups and $r_p,r'_p\geq 0$. It follows that $\mathrm{Hom}(M,N_{\mathrm{tors}})$ is a profinite group. Hence, it has no non-trivial infinitely divisible elements by Lemma \ref{nodiv}. Thus, every morphism from $G(K)$ to $\mathrm{Hom}(M,N_{\mathrm{tors}})$ is trivial by Lemma \ref{spandiv}. We deduce that the action of $G(K)$ on $H^1(Y,H)$ is trivial.
\end{proof}

We finish this section with an example that shows that one really needs to assume $K$ to be algebraically closed in Proposition \ref{prop action triviale}.(b) (and hence separably closed in assumption (i) of Theorems \ref{thm empilement abstrait 1} and \ref{thm empilement abstrait 2}).

\begin{exa}
Let $K$ be a field and let $L/K$ be a separable quadratic field extension such that the norm $N_{L/K}: L^\times \to K^\times $ is not surjective. Consider the extension of algebraic $K$-groups
\[1 \to R^1_{L/K}(\gm) \to R_{L/K}(\gm) \xrightarrow{N_{L/K}}\gm \to 1,\]
where $R_{L/K}$ denotes Weil scalar restriction and $N_{L/K}$ is the norm of $L/K$. Set $G=Y:=\gm$, $H:=R^1_{L/K}(\gm)$ and $X:=\spec(K)$. The extension above provides a class
\[x_0:=[R_{L/K}(\gm) \to \gm] \in H^1(Y,H).\]
This class is not invariant under the action of $G(K)$. Indeed, the action of $G(K)$ is described as follows:
\[\lambda\cdot x=x+p^*\delta(\lambda),\]
where $\lambda \in G(K)$, $x\in H^1(Y,H)$, $p:Y\to \mathrm{Spec}(K)$ is the structure morphism and $\delta:G(K)\to H^1(K,H)$ is the connecting map in Galois cohomology. In particular, since the arrow
\[p^*:H^1(K,H)\to H^1(Y,H),\]
is injective, we have $\lambda\cdot x=x$ if and only if $\lambda \in N_{L/K}(L^\times)$, which does not hold in general.
\end{exa}

\section{Positive characteristic}\label{sec char pos}

In this section, we present a weaker version of Theorem \ref{thm principal} that works in positive characteristic.

\begin{thm}\label{thm principal car positive}
Let $K$ be a field. Let $X$ be a connected smooth $K$-scheme. Let $G,H$ be smooth connected $K$-group schemes. Let $Y\to X$ be a $G$-torsor and let $Z\to Y$ be an $H$-torsor. Assume one of the following:
\begin{itemize}
    \item $H,G$ are abelian varieties and $X$ is proper.
    \item $H$ is a torus and $G$ is linear.
\end{itemize}
Then there exists a canonical extension of $K$-group schemes
\[\quad 1\to H\to E\to G\to 1,\]
together with a canonical structure of an $E$-torsor on the composite $Z\to X$.
\end{thm}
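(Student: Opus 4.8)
The plan is to deduce Theorem \ref{thm principal car positive} from the abstract Theorem \ref{thm empilement abstrait 1} by checking that its two hypotheses (i) and (ii) hold under each of the two sets of assumptions; the canonical extension $1\to H\to E\to G\to 1$ and the canonical $E$-torsor structure on $Z\to X$ then follow verbatim, as does their canonicity. In both cases at hand $H$ (a torus, or an abelian variety) is commutative, so the requirement in Theorem \ref{thm empilement abstrait 1} that $H$ be abelian is automatic, and I would treat conditions (i) and (ii) in turn.

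For condition (ii) I would argue exactly as in the characteristic-zero proof of Theorem \ref{thm principal}: by \'etale descent one reduces to $K$ separably closed, and then must show that $\cal M$ is representable and \'etale-locally isomorphic to some $\Z^n$. This is precisely the Rosenlicht-type Lemma \ref{RosenLem}, which is stated over separably closed fields of \emph{arbitrary} characteristic and so applies here without change. Concretely, in the torus case $\cal M(W)=(\cal O(Y_W)^\times/\cal O(W)^\times)^{\dim H}$ and Rosenlicht's unit theorem provides a finitely generated free group, with no properness hypothesis needed; in the abelian-variety case one uses instead that, $Y$ being proper, every morphism $Y_W\to H$ is a translate of a homomorphism from the Albanese, so that $\cal M(W)\cong\hom(\mathrm{Alb}(Y_W),H)$ is again a lattice. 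This is exactly where the hypothesis that $X$, hence $Y$, be proper enters the abelian-variety case.

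Condition (i) is the heart of the matter, and the argument splits. When $H$ is a torus and $G$ is linear I would simply reuse the relevant part of the proof of Proposition \ref{prop action triviale}.(b): one has $H^1(Y,H)\cong\pic(Y)^{\dim H}$, and Sumihiro's linearization theorem \cite[Thm.~5.2.1]{BrionLin} shows that $G(K)$ acts trivially on $\pic(Y)$. That argument is insensitive to the characteristic and transfers directly. When $H$ and $G$ are abelian varieties and $X$ is proper, the characteristic-zero strategy (reducing to $n$-torsion by divisibility, via Lemma \ref{spandiv}) is no longer available, and I would run a \emph{rigidity} argument instead: working over a separably closed $\Omega$, the assignment $g\mapsto \tau_g^*[Z]$, where $\tau_g$ is the torsor translation by $g\in G(\Omega)$, should be realized as a morphism of schemes from the connected $G$ to an object classifying $H$-torsor classes on the proper $Y$; being a morphism out of a connected scheme sending $e$ to $[Z]$, it is constant, which is exactly the asserted $G(\Omega)$-invariance of $[Z]$.

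The main obstacle is making this last step rigorous, i.e. establishing that the classifying object of $H$-torsors over the proper $Y$ is rigid enough for the connectedness argument to bite: when $H$ is an abelian variety, its algebraically nontrivial part is governed by $\hom(\widehat H,\pic^0(Y))$ and a N\'eron--Severi-type lattice, both discrete, so no nonconstant family can arise from the connected $G$. This is to be contrasted with the case $H=\gm$, where $\pic^0(Y)$ is positive-dimensional and the theorem of the square produces a genuinely nontrivial translation action --- which is precisely why $H$ a torus with $G$ an abelian variety gives a \emph{negative} answer in Table \ref{table}. Both the properness of $Y$ and the fact that $H$ is an abelian variety are therefore essential to this case, explaining why these hypotheses appear here and are absent in the torus case. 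Once (i) and (ii) are verified, Theorem \ref{thm empilement abstrait 1} yields the conclusion.
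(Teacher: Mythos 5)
Your overall skeleton is the paper's: deduce the theorem from Theorem \ref{thm empilement abstrait 1} by checking (i) and (ii), with (ii) obtained via \'etale descent from Lemma \ref{RosenLem} and with (i) in the torus/linear case given by Sumihiro's theorem \cite[Thm.~5.2.1]{BrionLin}. Those parts match the paper and are fine, up to one misattribution: Lemma \ref{RosenLem} requires no properness of $Y$, even when $H$ is an abelian variety, so properness plays no role in condition (ii); it is needed only for condition (i).

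The gap is in condition (i) when $G$ and $H$ are abelian varieties, which is the only genuinely new content of this theorem relative to Theorem \ref{thm principal}. Your rigidity argument rests on the claim that the object classifying $H$-torsor classes on the proper $Y$ is discrete, so that a map out of the connected $G$ must be constant. That claim fails in characteristic $p>0$ precisely where the difficulty lies, namely in the $p$-primary part. The group $H^1(Y,H)$ is torsion by Raynaud, and the prime-to-$p$ part is indeed controlled by the finite groups $H^1(Y,H[n])$ (the paper invokes \cite[Thm.~5.2]{BD} here, as in Proposition \ref{prop action triviale}.(a)); but the $p$-torsion receives a contribution from $H^1_{\mathrm{fppf}}(Y,\alpha_p)$, which embeds into the positive-dimensional $K$-vector space $H^1(Y,\mathcal O_Y)$ --- a connected, non-discrete object, so no discreteness argument can apply to it. The paper's Proposition \ref{prop action triviale H=A car pos} must treat this separately: it decomposes $H[p]$ as $(\Z/p\Z)^a\times\mu_p^b\times\alpha_p^c$, uses properness of $Y$ to get finiteness of $H^1_{\mathrm{fppf}}(Y,\Z/p\Z)$ and $H^1_{\mathrm{fppf}}(Y,\mu_p)$, and for $\alpha_p$ shows that the $G(K)$-action on $H^1(Y,\mathcal O_Y)$ comes from a homomorphism of algebraic groups $G\to\mathrm{GL}(H^1(Y,\mathcal O_Y))$, which is trivial because $G$ is an abelian variety and $\mathrm{GL}$ is affine; an induction on $n$, using divisibility of $G(K)$, then handles $H^1(Y,H)[p^n]$ for all $n$. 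You explicitly flagged the rigidity step as the main obstacle without resolving it, and the discreteness you invoke to close it is false for the $\alpha_p$-part, so this case of the proof is not established.
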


\begin{proof}
The proof of this result is given once again by Theorem \ref{thm empilement abstrait 1}, which is valid over an arbitrary field. We need to prove then that assumptions (i) and (ii) of Theorem \ref{thm empilement abstrait 1} are met. The proof of (ii) is exactly as before: By \'etale descent, we may assume that $K$ is separably closed and we need to prove that $\cal M$ is representable and isomorphic to $\Z^n$. This is a direct consequence of Lemma \ref{RosenLem}, which is valid over separably closed fields.

Thus, we are only left with (i). In the second case, this is a direct consequence of Sumihiro's result we used before (cf.~\cite[Thm.~5.2.1]{BrionLin}). In the first case, (i) is implied by Proposition \ref{prop action triviale H=A car pos} here below.
\end{proof}

\begin{prop}\label{prop action triviale H=A car pos}
Let $K$ be an separably closed field. Let $X$ be a smooth proper $K$-scheme. Let $G$ and $H$ be abelian varietes and let $Y\to X$ be a $G$-torsor. Then the action of $G(K)$ on $H^1(Y,H)$ is trivial.
\end{prop}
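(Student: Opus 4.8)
The plan is to run the same ``family of translations'' mechanism that underlies Proposition \ref{prop action triviale}, but to replace the characteristic-zero inputs by arguments adapted to a separably closed base. First note that, since $X$ is proper and $G$ is an abelian variety, the torsor $Y\to X$ is proper and smooth, and $Y$ is geometrically connected with a $K$-point. Writing $\mu\colon G\times_K Y\to Y$ for the action map and $q\colon G\times_K Y\to Y$ for the second projection, one has $\mu|_{\{e\}\times Y}=q|_{\{e\}\times Y}=\mathrm{id}_Y$, whereas restricting $\mu$ and $q$ to a fibre $\{g\}\times Y$ recovers the translation $t_g$ and $\mathrm{id}_Y$ respectively. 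Hence, given $\alpha\in H^1(Y,H)$, the difference $D:=\mu^*\alpha-q^*\alpha\in H^1(G\times_K Y,H)$ vanishes on $\{e\}\times Y$, and the whole proposition amounts to the single assertion that $D$ restricts to $0$ on every fibre $\{g\}\times Y$ with $g\in G(K)$, since that restriction is exactly $t_g^*\alpha-\alpha$. Because $G(K)$ acts on $H^1(Y,H)$ by group automorphisms, I would then be free to treat separately the prime-to-$p$ and $p$-primary components of the (torsion) class $\alpha$.

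For the reduction to torsion I would invoke that $H^1(Y,H)$ is torsion of cofinite type, as in Lemma \ref{tcof} (the finiteness of $H^1(Y,H[n])$ for $n$ prime to the characteristic being \cite[Exp.~XVI,~Thm.~5.2]{SGA4}). For the \emph{prime-to-}$p$ \emph{part} the argument is then identical to Proposition \ref{prop action triviale}.(a): a class killed by an integer $m$ prime to $p$ comes from $H^1(Y,H[m])$ with $H[m]$ finite \'etale, and the homotopy-invariance of \'etale cohomology of a finite locally constant sheaf along the connected parameter space $G$ — the content of \cite[Thm.~5.2]{BD}, which for finite \'etale coefficients is insensitive to the characteristic — forces $D$ to become a pullback from $G$ on $H[m]$-cohomology, so that $t_g^*\alpha=\alpha$. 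Structurally this is the same phenomenon as in the proof of Proposition \ref{prop action triviale}.(b): the $g$-dependence of $t_g^*\alpha$ is captured by a homomorphism from $G(K)$ into a group of $\mathrm{Hom}$-type between cofinite-type torsion groups, hence profinite; by Lemma \ref{nodiv} such a target has no nontrivial infinitely divisible elements, while $G(K)$ — being the group of $K$-points of an abelian variety over a separably closed field — is divisible by every integer prime to $p$, so the prime-to-$p$ part of the homomorphism is trivial.

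The genuine difficulty, and the reason properness of $X$ is imposed, is the \emph{$p$-primary part}. Here $H[p^n]=A[p^n]$ is a non-\'etale finite flat group scheme, so one must work in the fppf topology, where neither \'etale homotopy-invariance nor smooth--proper base change is available for $p$-torsion coefficients (witness the $\mathbb{A}^1$-type non-invariance of fppf $p$-torsion cohomology). I expect this to be the main obstacle. My plan is to keep the \emph{smooth} coefficients $H=A$ and to exploit properness of the family $\mathrm{pr}_G\colon G\times_K Y\to G$: properness rigidifies the relative cohomology, so that a seesaw/theorem-of-the-cube analysis of $H^1(G\times_K Y,A)$ should exhibit $D$, modulo a class pulled back from $G$ (which on each fibre restricts into $H^1(\spec K,A)=0$), as a bilinear cross-term, i.e.\ a homomorphism from $G(K)$ into a $\mathrm{Hom}$-group attached to the $p$-divisible/Tate data of $G$ and $A$. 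The decisive point to extract is that, in contrast with the case $H=\gm$ (where the analogous target is the \emph{divisible} group $\pic^0(Y)(K)$, accounting for the negative answer in Table \ref{table}), for an abelian variety $H=A$ this target is torsion-free with no nonzero prime-to-$p$-divisible elements.

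The step I expect to be hardest is precisely this last control of the $p$-primary cross-term: making rigorous, in characteristic $p$ and in the fppf topology, both that the $g$-dependence of $t_g^*\alpha$ lands in such a $\mathrm{Hom}$-type group and that $G(K)$ sees it only through its prime-to-$p$-divisible part. This is where the properness hypothesis is indispensable, since it is what rules out the pathologies of fppf $p$-torsion cohomology over non-proper bases, and it is the only place where the hypotheses of this proposition genuinely exceed those of Proposition \ref{prop action triviale}.(a).
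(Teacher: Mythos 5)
Your reduction to torsion classes and your treatment of the prime-to-$p$ part coincide with the paper's: both rest on Lemma \ref{tcof} (or rather on Raynaud's torsionness result) together with \cite[Thm.~5.2]{BD} applied to the finite \'etale group schemes $H[m]$, $(m,p)=1$, and this part of your argument is fine. The genuine gap is exactly where you predict it: the $p$-primary part. Your plan is to show that the difference class $D=\mu^*\alpha-q^*\alpha$ on $G\times_K Y$ is governed by a ``seesaw/theorem-of-the-cube'' structure for $H^1_{\mathrm{fppf}}(G\times_K Y,A)$, producing a bilinear cross-term landing in a $\mathrm{Hom}$-group of Tate-module type that you assert is torsion-free with no prime-to-$p$-divisible elements. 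No such cube/seesaw theorem is available for $H^1$ with coefficients in an abelian variety (the classical statements concern line bundles, i.e.\ $H^1(-,\gm)$), the identification of the ``target'' is never made, and its asserted structure is never proved; as written this is a programme, not an argument, and it is not clear it can be carried out.

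The paper takes a much more concrete route for the $p$-part, which you may want to compare with. After reducing (via smoothness of $H$) to $K$ algebraically closed, so that $G(K)$ is divisible by \emph{all} integers including $p$, it proves triviality of the action on $H^1(Y,H)[p^n]$ by induction on $n$. For $n=1$ one uses that $H^1(Y,H)[p]$ is a quotient of $H^1_{\mathrm{fppf}}(Y,H[p])$ and that $H[p]\cong(\mathbb{Z}/p\mathbb{Z})^a\times\mu_p^b\times\alpha_p^c$; properness of $Y$ then gives finiteness of $H^1_{\mathrm{fppf}}(Y,\mathbb{Z}/p\mathbb{Z})$ and (via Kummer theory and $K[Y]=K$) of $H^1_{\mathrm{fppf}}(Y,\mu_p)$, so the divisible group $G(K)$ acts trivially there; for $\alpha_p$ one embeds $H^1_{\mathrm{fppf}}(Y,\alpha_p)$ into the finite-dimensional vector space $H^1(Y,\mathcal{O}_Y)$, on which the action comes from a homomorphism of algebraic groups $G\to\mathrm{GL}(H^1(Y,\mathcal{O}_Y))$, necessarily trivial since $G$ is an abelian variety. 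The inductive step uses the exact sequence relating $p^n$- and $p^{n+1}$-torsion: the action on $H^1(Y,H)[p^{n+1}]$ is then measured by a homomorphism from the divisible group $G(K)$ to a $p$-torsion $\mathrm{Hom}$-group, hence trivial. This d\'evissage is the missing content of your ``hardest step''; without it (or a worked-out substitute for your cube argument) the proposal does not establish the proposition.
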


\begin{proof}
Without loss of generality, we can assume that $X$ (and hence $Y$) is connected. Moreover, since $H$ is smooth, we may and do assume that $K$ is algebraically closed.\\
Note that $H^1(Y,H)$ is torsion by \cite[Cor.~XIII.2.4, Prop.~XIII.2.6]{raynaud}. As in the proof of Proposition \ref{prop action triviale}, \cite[Thm.~5.2]{BD} implies that $G(K)$ acts trivially on the $q$-primary part of $H^1(Y,H)$ for every prime $q\neq p$. It is therefore enough to prove that $G(K)$ also acts trivially on $H^1(Y,H)[p^n]$ for every $n\in\N$. We proceed by induction on $n$.\\

For $n=1$, we know that $H^1(Y,H)[p]$ is a quotient of $H^1_{\mathrm{fppf}}(Y,H[p])$ and $H[p]\cong (\mathbb{Z}/p\mathbb{Z})^a\times (\mu_p)^b \times (\alpha_p)^c$ for some integers $a,b,c$ (cf.~\cite{Shatz86}). It is therefore enough to prove that the action of $G(K)$ on $H^1_{\mathrm{fppf}}(Y,\mathbb{Z}/p\mathbb{Z})$ and $H^1_{\mathrm{fppf}}(Y,\mu_p)$ and $H^1_{\mathrm{fppf}}(Y,\alpha_p)$ is trivial.

Since $Y$ is proper over $K$, \cite[VI, Cor.~2.8]{Milne} ensures the finiteness of $H^1_{\mathrm{fppf}}(Y,\mathbb{Z}/p\mathbb{Z})$, which implies the triviality of the action in this case.

Now by Kummer theory, we have an exact sequence:
$$0 \rightarrow K[Y]^{\times}/(K[Y]^{\times})^p \rightarrow H^1_{\mathrm{fppf}}(Y,\mu_p) \rightarrow \mathrm{Pic}(Y)[p] \rightarrow 0.$$
Using the properness of $Y$ once more, we have $K[Y]=K$, and hence the quotient $K[Y]^{\times}/(K[Y]^{\times})^p$ is trivial. Moreover, the group $\mathrm{Pic}(Y)[p]$ is always finite. Hence $H^1_{\mathrm{fppf}}(Y,\mu_p)$ is finite, which implies the triviality of the action in this case.

We deal now with $H^1_{\mathrm{fppf}}(Y,\alpha_p)$. Let $A$ be a $K$-algebra. Then  $H^0(Y_A,\cal O_{Y_A})=A$ and $H^1(Y_A,\cal O_{Y_A})=H^1_{\mathrm{fppf}}(Y,\cal O_{Y})\otimes_K A$, so that, after taking cohomology of  the extension of fppf sheaves (over $Y_A$) $$0 \to \alpha_p  \to \mathbb G_a \xrightarrow{\mathrm{Frob}}   \mathbb G_a \to 0,$$ we get  an exact sequence
$$ 0 \to A/A^p \to H^1_{\mathrm{fppf}}(Y_A,\alpha_p) \to H^1(Y,\cal O_{Y})\otimes_K A.$$   Taking $A=K$, we get an inclusion of finite-dimensional $K$-vector spaces $$ H^1_{\mathrm{fppf}}(Y,\alpha_p) \subset  H^1(Y,\cal O_{Y}).$$ It then suffices to show that $G(K)$ acts trivially on
$H^1_{\mathrm{fppf}}(Y,\cal O_{Y})$. To do so, observe that the $G$-action on $Y$, induces an action of the abstract group $G(A)$ on the $A$-scheme $Y_A$, and hence an $A$-linear action of $G(A)$ on $H^1_{\mathrm{fppf}}(Y,\cal O_{Y})\otimes_K A$. Being functorial in $A$, it arises from a morphism of algebraic $K$-groups $\rho: G \rightarrow \mathrm{GL}(H^1_{\mathrm{fppf}}(Y,\cal O_{Y})), $ which is trivial because $G$ is an abelian variety. This concludes the proof for $n=1$.\\

Consider now the following exact sequence
\[0\to H^1(Y,H)[p]\to H^1(Y,H)[p^{n+1}]\to H^1(Y,H)[p^n],\]
and let $I$ be the image of the rightmost arrow. By the inductive assumption, the group $G(K)$ acts trivially on $H^1(Y,H)[p]$ and on $I$. Hence the action of $G(K)$ on $H^1(Y,H)[p^{n+1}]$ corresponds to a morphism $G(K)\to \mathrm{Hom}(I, H^1(Y,H)[p])$. But this morphism is trivial since $G(K)$ is divisible and $\mathrm{Hom}(I, H^1(Y,H)[p])$ is $p$-torsion. We deduce that $G(K)$ acts trivially on $H^1(Y,H)[p^{n+1}]$, as wished.
\end{proof}

\section{Counterexamples}\label{sec counterexamples}

In this section, we provide examples of towers of torsors that do not admit a torsor structure under an extension of the two involved groups. We treat every negative case considered in Table \ref{table}.

\subsection{Examples where $H$ is a torus}
As it is suggested by Lemma \ref{RosenLem}, when $H$ is a torus, assumption (ii) of Theorem \ref{thm empilement abstrait 1} is satisfied in all generality. According to Table \ref{table}, it is then assumption (i), on the $G(\Omega)$-invariance of the $H$-torsor $Z\to Y$ that must fail in order to get counterexamples.

\begin{exa}
Assume that $K$ is algebraically closed of characteristic $0$, $X=\spec(K)$, $G=Y$ is an elliptic curve, and $H=\mathbb{G}_\mathrm{m}$. Then the group $G(K)$ acts on $H^1(Y,H)=\pic(G)$ via the following formula:
$$Q \cdot [D]= [D] + \deg(D) \cdot ([Q]-[O]),\quad Q\in G(K),\, [D]\in\pic(G).$$
This action is not trivial, and hence one can find a class in $\pic(G)$ that is not $G(K)$-invariant. By Proposition \ref{prop inv necessaire}, this class represents an $H$-torsor $Z\to Y$ such that the composition $Z\to \mathrm{Spec}(K)$ is not a torsor under an extension $E$ of $G$ by $H$.
\end{exa}

\subsection{Examples where $H$ is unipotent}

We continue with the case in which $H$ is unipotent. The following example covers the cases in which $G$ is either a torus, a unipotent group or a semisimple group.

\begin{exa}\label{ex G=aff H=Ga}
Let $H=\ga$, $X$ an elliptic curve over an algebraically closed field $K$ of characteristic zero and $Y$ the trivial $G$-torsor with $G$ either $\ga$, $\gm$ or $\sln$ (with $n\geq 2$).

On the one hand, by Künneth's formula we have $$H^1(Y,\mathbb{G}_a)=H^1(X,\ga)\otimes_K \cal O(G)=\cal O(G),$$ which is an infinite-dimensional $K$-vector space.

On the other hand, every extension $E$ of $G$ by $\ga$ is split by the basic theory of linear groups. Even more, if $G=\ga$ or $G=\sln$, then the extension is simply the direct product, while if $G=\gm$, then it corresponds to the semi-direct product $E_k:=\ga\rtimes_{k}\gm$ with $\gm$ acting on $\ga$ by a character of the form \begin{align*}
    \chi_k: \gm &\rightarrow \mathrm{Aut}(\ga)=\gm, \\x &\mapsto x^k,
\end{align*}
for some $k\in\Z$. In particular, these extensions are parametrized by $\Z$.

Thus, in the former two cases (where $G=\ga$ or $G=\sln$), we get that $E$-torsors lifting $Y\to X$ are classified by the one-dimensional vector space $H^1(X,\ga)$, while $H$-torsors $Z\rightarrow Y$ are classified by the infinite dimensional vector space $H^1(X,\ga)\otimes_K \cal O(G)$. This tells us that there exist $H$-torsors $Z\rightarrow Y$ such that the composite $Z\rightarrow Y\rightarrow X$ is not a torsor under an extension of $G$ by $H$.

In the latter case where $G=\gm$ and $E_k=\ga\rtimes_{k}\gm$, we have a split exact sequence:
\[1\to H^1(X,\ga) \to H^1(X,E_k) \to H^1(X,G)\to 1.\]
Since the torsor $Y \rightarrow X$ is trivial, we deduce that $E_k$-torsors lifting $Y\to X$ are classified by the one-dimensional vector space $H^1(X,\ga)$. Consider then the composite
\[H^1(X,\ga)\to H^1(X,E_k)\to H^1(Y,\ga),\]
where the first arrow is induced by the injection $\ga \subset E_k$ and the second is the arrow that sends an $E_k$-torsor $Z \rightarrow X$ to the $\ga$-torsor $Z \rightarrow Z/\ga=Y$. One can easily check that its image in $H^1(Y,\ga)=\mathcal{O}(G)$ is the one-dimensional subspace generated by $\chi_k$. We deduce that $H$-torsors over $Y$ that may be lifted to an $E_k$-torsor over $X$ for some $k\in\Z$ correspond, inside $H^1(Y,\mathbb{G}_a)=\cal O(G)$, to the union of the one-dimensional subspaces generated by the different $\chi_k\in\cal O(G)$. In particular, there exist $H$-torsors $Z\rightarrow Y$ such that the composite $Z\rightarrow Y\rightarrow X$ is not a torsor under an extension of $G$ by $H$.
\end{exa}

This example  leads  to a more general construction for towers of $\ga$-torsors over curves of genus $\geq 2$.

\begin{exa}\label{ex G=H=Ga}
Let $X/K$ be a smooth projective curve, of genus $g \geq 2$. Then, the $K$-vector space $H^1(X,\mathcal O_X)$ is $g$-dimensional. Let $Y\to X$ be a non-trivial $\ga$-torsor, whose class in $H^1(X,\mathcal \ga)$ we denote by $y$. Using the correspondence between $\ga$-torsors over $X$ and extensions of vector bundles of $\mathcal O_X$ by itself (both objects are classified by the group $H^1(X,\ga)=H^1(X,\cal O_X)$ by étale descent), $Y$ corresponds to an extension
\[\mathcal E:\qquad 0 \to \mathcal O_X \xrightarrow{s} E \xrightarrow{\pi}  \mathcal O_X \to 0.\]
More precisely, we have
\[Y = \spec\left( \varinjlim_n\, \Sym^n E \right),\]
where the transition morphisms $\Sym^n E \rightarrow \Sym^{n+1} E$ are given by mulitiplication by $s$, and hence
\[H^1(Y,\mathcal O_Y)=H^1(X,\varinjlim_n\, \Sym^n E) = \varinjlim_n\, H^1(X, \Sym^n E).\]
Now, to compute this direct limit, one can use the symmetric powers of $\mathcal{E}$:
\[\Sym^n \mathcal E:\qquad 0 \to \mathcal \Sym^{n-1}  E \xrightarrow{\times s} \Sym^{n} E \xrightarrow{\pi^n}  \mathcal O_X \to 0,\]
where $$\pi^n(e_1 \otimes \ldots \otimes e_n):=\pi(e_1) \ldots \pi(e_n).$$  Denote by $y^n\in H^1(X, \Sym^n E) $ the class of $\Sym^n \mathcal E$. We have a commutative diagram of extensions
\[\xymatrix{ 0 \ar[r] & \Sym^n(E)\ar[r]^{\times s} \ar[d]^{\pi^n} & \Sym^{n+1} (E)\ar[r]^-{\pi^{n+1}} \ar[d]^g & \mathcal O_E \ar[d]^{ \times (n+1)} \ar[r] & 0 \\ 0 \ar[r] &\mathcal O_E  \ar[r]^s & E\ar[r]^\pi& \mathcal O_E  \ar[r] & 0,}\]
where $g$ is given by the formula
\[g(e_0 \ldots e_{n})= \sum_0^{n}\pi(e_0) \ldots \widehat {\pi(e_i)} \ldots \pi(e_{n}) e_i,\]
where $\widehat{\phantom{a}}$ denotes an omitted variable.
Since $E$ is non-split, and since $(n+1) \in K^\times$, we get that (the class of) $\Sym^n \mathcal E$ does not belong to the image of $ H^1(X, \Sym^{n-1} E) \to  H^1(X, \Sym^{n} E)$; in particular, it is non-split.

Now, the cohomology exact sequence associated to $\Sym^n\cal E$ gives:
\[ K \to H^1(X,\Sym^{n-1} E) \xrightarrow{s_n} H^1(X,\Sym^{n} E) \to  H^1(X,\mathcal O_X) \to 0,\]
where the image of the leftmost arrow is precisely the subspace generated by the class of $\Sym^n \mathcal E$. This tells us that, if we set $V_n:=H^1(X, \Sym^n E) /\langle y^n\rangle$, we have an exact sequence
\[0\to V_n\to V_{n+1}\to H^1(X,\mathcal O_X) /\langle y\rangle \to 0.\]
Since $\dim(H^1(X,\mathcal O_X))=g\geq 2$, we see that the direct limit of the $V_n$'s has infinite dimension, so that the same holds for $H^1(Y,\cal O_Y)=\varinjlim_n\,H^1(X, \Sym^n E)$.

Assume now, that for every $\ga$-torsor $Z\to Y$, we can find  an extension of $X$-group schemes
\[ 1 \to \ga \to \Gamma \to \ga \to 1,\]
such that $Z \to X$ can be equipped with the structure of a $\Gamma$-torsor. Since $K$ has characteristic zero, $\Gamma$ is the affine space of a vector bundle $F$ over $X$, fitting into an extension of vector bundles over $X$
\[\mathcal F:\qquad 0 \to \mathcal O_X \xrightarrow{s} F \xrightarrow{\pi}  \mathcal O_X \to 0.\]
Using the same computation as above, we get that $H^1(X,F)$ is $(2g-1)$-dimensional. Thus, the moduli space of torsors under extensions of $\ga$ by itself is $(3g-1)$-dimensional. This contradicts the fact that $H^1(Y,\cal O_Y)$ is infinite-dimensional.
\end{exa}

\begin{rem}\label{rem affine et unip}
Note that in Examples \ref{ex G=aff H=Ga} and \ref{ex G=H=Ga} the base scheme $X$ is always proper. On the opposite side, when the base is affine, we get a particular case where the answer to Question \ref{ques} is positive with $H$ unipotent as follows:

Assume that $H$ is a \emph{split} unipotent group, that $G$ is linear and that $X$ is \emph{affine}. Let $Y$ be a $G$-torsor over $X$ and let $Z$ be an $H$-torsor avec $Y$. Then $Y$ is affine, and hence $H^1(Y,\mathbb{G}_{\mathrm{a}})=0$. We deduce that $H^1(Y,H)$ is trivial, so that $Z=Y \times H$. In particular, $Z$ is a $(G\times H)$-torsor over $X$.
\end{rem}

In contrast with the last remark, if $G$ is not linear, we can also provide an example in which the base scheme $X$ is not proper over $K$.

\begin{exa}\label{ex G=A H=Ga}
Let $G=A$ be an abelian variety, $H=\ga$ and let $X$ be the spectrum of a function field $L$ over $K$. Then $\ext(A,\ga)\simeq H^1(A,\cal O_A)$ by \cite[VII.17, Thm.~7]{SerreGpsAlg}, which is a $K$-vector space for group extensions over $K$ and an $L$-vector space of the same dimension if we do the corresponding base change (and the restriction arrow is the obvious injection). This tells us immediately that there are extensions of $A$ by $\ga$ over $L$ that do not come from extensions over $K$. In particular, these extensions are towers of torsors over $L$ that cannot have a torsor structure under an extension defined over $K$ (if an extension were a torsor under another extension, they would have the same underlying variety and hence define the same element in $H^1(A,\cal O_A)$). Obviously, these extensions can be built over a suitable (smooth affine) $K$-scheme with function field $L$, if one wants $X$ to be more than just a single point.
\end{exa}

\subsection{Examples where $H$ is semisimple}

We finish this section with examples in which $H$ is semisimple. This completes the study of all cases in Table \ref{table}.

\begin{exa}\label{ex G=aff H=PGLn}
Let $H=\pgl_n$ with $n\geq 2$ and let $G$ be either $\ga^m$, $\gm^m$, $\pgl_m$, or an abelian variety $A$. Consider an $H$-torsor $Z\to G$, and the trivial $G$-torsor $G\to \mathrm{Spec}(K)$ below it. If Question \ref{ques} had a positive answer for this tower, then $Z$ would be an $E$-torsor for some extension $E$ of $G$ by $H$. However, in all four cases for $G$ (assuming $n\gg m$ if $G=\ga^m$ and assuming $K$ algebraically closed if $G=A$) we have that the only possible extension is the direct product $E=G\times H$. Indeed, the first three cases are  elementary results from the theory of linear groups, and the case $G=A$ comes from \cite[Prop.~3.1.1]{BrionSamuelUma}. This implies in particular that the class in $H^1(G,H)$ of $Z\to G$ must come from $H^1(K,H)$. Thus, any class in $H^1(G,H)$ which does not come from $H^1(K,H)$ gives a negative answer to Question \ref{ques}. Now, recall that classes in $H^1(G,H)$ classify Azumaya algebras over $G$, which correspond also to classes in the Brauer group $\br(G)$ (cf.~for instance \cite[Thm.~3.3.2]{CTSkorBr}).

Assume that $G=\ga^m$ and that $\br(K)\neq 0$. Then by \cite[Prop.~2]{OjSr}, there exist non-constant Azumaya algebras over $\ga^2$. These correspond to elements in $H^1(G,H)$ that do not come from $H^1(K,H)$.

Assume that $G=\gm^m$. Then a simple computation using residue maps with respect to the irreducible divisors in $\bb P^m\smallsetminus\gm^m$ (cf.~for instance \cite[Thm.~3.7.2]{CTSkorBr}) tells us that $\br(G)/\br(K)\neq 0$. A class in $\br(G)\smallsetminus\br(K)$ corresponds then to a class in $H^1(G,H)$ which does not come from $H^1(K,H)$.

Assume that $G=\pgl_m$ and that $H^1(K,\Z/m\Z)\neq 0$. Since the subgroup of algebraic classes in $\br(G)/\br(K)$ is isomorphic to $H^1(K,\Z/m\Z)$ (cf.~\cite[Lem.~6.9(iii)]{Sansuc81}), one can find non-constant classes as well in this case.

Finally, assume that $G=A$ and that $K$ is algebraically closed. Then it is well-known that $\br(A)/\br(K)$ is non-trivial in general (cf.~\cite[p.~182]{BerkovichBrauer}). We conclude as before.
\end{exa}

\begin{rem}
Given that all the examples above use the adjoint group $H=\pgl_n$, one could wonder whether Question \ref{ques} has a positive answer when $H$ is semi-simple and simply connected. This question remains open.
\end{rem}

\begin{appendix}
\section{An elementary proof of Rosenlicht's Lemma}

We prove the following lemma, due to Rosenlicht in the case of an algebraically closed field (cf.~\cite{Rosenlicht}). 

\begin{lem}\label{RosenLem}
Let $H$ be a semi-abelian variety over a field $K$. Let $V$ and $W$ be geometrically integral $K$-varieties. Then, the following holds.
\begin{enumerate}
    \item The abelian group $H(W)/H(K)$ is finitely generated and free.
    \item If $K$ is separably closed, the sequence
\[ 0 \to H(K) \xrightarrow{h \mapsto (h,-h)}  H(V) \times H(W) \xrightarrow{\pi_V^*+\pi_W^*} H(V \times_K W) \to 0,\]
is exact, where
\begin{align*}
\pi_V^*:  H(V)  &\to  H(V \times_K W)\\
\quad h &\mapsto h \circ \pi_V.
\end{align*}
\end{enumerate}
\end{lem}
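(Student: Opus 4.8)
The plan is to reduce both statements to the two extreme cases $H=T$ a torus and $H=A$ an abelian variety, by means of the structure sequence $0\to T\to H\to A\to 0$ of the semi-abelian variety $H$, and then to treat these cases separately and glue them together.

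For part (1) I would first reduce to the case where $K$ is algebraically closed. Since $W$ is geometrically integral, $K$ is algebraically closed inside the function field $K(W)$, so any morphism $W\to H$ that becomes constant after base change to $\bar K$ is already constant; this yields an injection $H(W)/H(K)\hookrightarrow H(W_{\bar K})/H(\bar K)$, and since a subgroup of a finitely generated free abelian group is again finitely generated and free, it suffices to argue over $\bar K$. There the sequence $0\to T(W)/T(K)\to H(W)/H(K)\to A(W)/A(K)$ is exact: left exactness is formal, and the needed surjectivity of $H(\bar K)\to A(\bar K)$ holds because $H^1(\bar K,T)=0$. Hence finite generation and freeness for $T$ and for $A$ give it for $H$. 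For the torus ($H=\gm$) one has $H(W)/H(K)=\mathcal{O}(W)^\times/K^\times$, which I would prove to be finitely generated free by passing to the normalization and to a normal projective compactification, and observing that a unit has divisor supported on the boundary, giving an injection into the free abelian group generated by the finitely many boundary components. For the abelian variety, any morphism from a geometrically integral variety into $A$ is, up to translation, a homomorphism out of the Albanese, so $A(W)/A(K)\cong\hom(\mathrm{Alb}(W),A)$ is finitely generated and free.

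For part (2) the key device is the mixed difference: fixing rational points $v_0\in V(K)$ and $w_0\in W(K)$ — which exist because over a separably closed field a geometrically integral, hence generically smooth, variety has a rational point — I set, for $F\in H(V\times_K W)$,
\[\Phi_F(v,w):=F(v,w)-F(v,w_0)-F(v_0,w)+F(v_0,w_0),\]
a morphism vanishing on both slices $V\times\{w_0\}$ and $\{v_0\}\times W$. Since $H\to A$ is a homomorphism, the image of $\Phi_F$ in $A$ is the analogous mixed difference for the composite $V\times_K W\to A$, and this vanishes by the abelian variety case of (2) (equivalently, the family of normalized morphisms $W\to A$ parametrized by the connected variety $V$ takes values in the discrete group $\hom(\mathrm{Alb}(W),A)$, hence is constant and equal to its value at $v_0$, namely $0$). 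Therefore $\Phi_F$ factors through $T$, and Rosenlicht's theorem that units on a product split — the torus case of (2) — forces $\Phi_F\equiv 0$. Rearranging expresses $F$ as $\pi_V^* f+\pi_W^* g$, which is the surjectivity. Injectivity of $h\mapsto(h,-h)$ and the identification of the kernel of $\pi_V^*+\pi_W^*$ with its image are then immediate by restricting to the two slices.

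The hard part will be the surjectivity in (2), that is, the two splitting inputs. Rosenlicht's unit theorem for $T$ requires a careful compactification-and-divisor argument precisely because $V$ and $W$ are assumed neither proper nor normal, and the rigidity statement for $A$ must be run without any completeness hypothesis, which is what forces the passage through the Albanese together with the connectedness argument above. A secondary technical point, special to positive characteristic, is the production of the rational points $v_0,w_0$, which relies on the generic smoothness of geometrically reduced varieties over a separably closed field.
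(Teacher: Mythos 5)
Your overall architecture coincides with the paper's: the d\'evissage $0\to T\to H\to A\to 0$ reducing both parts to the cases $H=\gm$ and $H=A$, and, for part (2), the normalized mixed difference $\Phi_F$, killed first in $A$ by rigidity for products mapping to abelian varieties (the paper cites \cite[Thm.~3.4]{MilneAV} for exactly the statement you sketch) and then reduced to the multiplicative case; the kernel identifications by restriction to slices are also as in the paper. Where you genuinely diverge is in how the two irreducible inputs are established in part (1): you use a normal projective compactification and the boundary-divisor map for $\gm$, and the Albanese for $A$, which is Rosenlicht's classical route and works over an algebraically closed field; the paper instead fibres $W$ by smooth curves over principal open subsets of affine space, reducing everything to those opens and to smooth affine curves, where it uses the compactified curve and the finite generation of $\hom(\mathrm{Jac}(C),H)$. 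The paper's detour is deliberate: the lemma is needed over separably closed, possibly imperfect, fields, and the curve-fibering argument avoids both normal compactifications of products and the existence of Albanese varieties (or smooth proper models) of non-proper varieties in positive characteristic. Your Albanese step in particular needs extra justification in characteristic $p$; injecting $A(W)/A(K)$ into $\hom(\mathrm{Jac}(\bar C),A)$ for a suitable curve $C\subset W$, as the paper effectively does, is the cleaner fix.

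The one point I would call a genuine gap rather than a different route is the crux of part (2): the splitting of units on a product, i.e.\ that every $f\in\mathcal O(V\times_K W)^\times$ is of the form $\pi_V^*(g)\,\pi_W^*(h)$. You correctly isolate this as the hard step, but then invoke it as ``Rosenlicht's theorem \dots\ a careful compactification-and-divisor argument'' without carrying it out. Since the statement being proved \emph{is} (the separably closed extension of) Rosenlicht's lemma, this cannot simply be quoted; and the published version \cite{Rosenlicht} is over algebraically closed fields. (The descent from $\bar K$ to separably closed $K$ is in fact painless for this particular statement: once $f=g\cdot h$ with $g(v_0)=1$, necessarily $g(v)=f(v,w_0)f(v_0,w_0)^{-1}$ and $h(w)=f(v_0,w)$, which are visibly defined over $K$ --- but one has to say so.) The paper's proof of this step is the real content of the appendix: reduce $W$ to a smooth curve by Bertini, factor the slice function $g_1=f(v_1,-)$ as $W\to W'\to U\subset\gm$ with $W\to W'$ purely inseparable and $W'\to U$ finite \'etale, and use the norm along $g_1$, together with the already-settled case $W=U$, to show $g_1^d$ and hence $g_1$ is constant. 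If you prefer to keep the compactification route, you must actually do the divisor bookkeeping on $\bar V\times_K\bar W$ (normality of the product, behaviour over the non-normal locus of $V$ and $W$), which is precisely the work the paper's ``inner'' argument is designed to bypass.
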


\begin{proof}
In both statements, $W$ and $V$ can be replaced by a non-empty open subvariety. In particular, by generic smoothness, we can thus assume that $V$ and $W$ are smooth over $K$.\\

Let us prove the first assertion. Denoting by $\bar K$ a separable closure of $K$, the natural arrow $$H(W)/ H(K) \to H(\bar W)/ H(\bar K)$$ is injective, so that we may assume $K= \bar K$. By definition, there is an exact sequence
\[1\to T\to H\xrightarrow{\pi} A\to 1,\]
with $T$ a torus and $A$ an abelian variety. Since $K$ is separably closed and $T$ is smooth, the snake lemma gives an exact sequence
\[1\to T(W)/T(K)\to H(W)/H(K)\xrightarrow{\pi} A(W)/A(K).\]
It will suffice then to treat the cases $H=A$, or $H=\gm$.

Up to shrinking $W$, we can assume there is a smooth $K$-morphism $W\to U$ of relative dimension one and with geometrically connected fibers, where $U=D(f)$ is a principal open subset of some affine space. Denote by $K'=K(U)$ the field of functions of $U$, and set $W':=W \times_U K'$. Then $W'$ is a smooth $K'$-curve, and there is an exact sequence 
\[ 0 \to H(U) / H(K) \to H(W)/H(K) \to   H(W')/H(K').\]
Thus, the problem is further reduced to two particular cases: $W=D(f)$ is a principal open subset of some affine space, or $W$ is a smooth curve over $K$. The first case is trivial for abelian varieties (a morphism from a rational variety to an abelian variety is constant). For $\gm$, it is dealt with by a straightforward direct computation. It remains to treat the case of a smooth affine curve $W/K$. The case $H=\gm$ is once again a straightforward computation, using the smooth proper curve $C$ compactifying $W$ and the fact that $H(C)=H(K)$. For $H$ an abelian variety, using \cite[Thm.~6.1]{MilneAV}, the statement is equivalent to $\hom_\mathrm{gp}(\mathrm{Jac(C)},H)$ being a free abelian group of finite rank, which holds by \cite[Thm.~10.15]{MilneAV}.\\

In order to establish the second assertion, we only need to check the surjectivity of $\pi_V^*+\pi_W^*$. Pick rational points $v_0 \in V(K)$ and $w_0 \in W(K)$, which exist since $K$ is separably closed and $V,W$ are smooth over $K$. For $f\in  H(V \times_K W)$, set
\[\tilde f(v,w):=f(v,w)-f(v_0,w)-f(v,w_0)+f(v_0,w_0).\]
Then, the composite
\[\pi \circ \tilde f: V \times_K W \to A\]
vanishes on $\{v_0 \} \times_K W$ and on $V \times_K \{w_0 \}$. Using \cite[Thm.~3.4]{MilneAV}, we get $\pi \circ \tilde f=0$. In other words, $\tilde f$ takes values in $T$. Thus, in order to conclude, it suffices to prove the exactness when $H=\gm$. Fix $f\in H(V\times_K W)=\mathcal O_{V \times_K W} ^\times$. Replacing $f$ by $(v,w)\mapsto f(v,w)f(v_0,w)^{-1}$, we may assume that $f=1$ on  $\{v_0 \} \times_K W$. To conclude, we have to prove that $f$ factors through the projection $\pi_V:V \times W \to V $, i.e. that $f$ does not depend on $W$. Using that the smooth $K$-variety $W$ is covered by smooth $K$-curves (for instance, by Bertini's Theorem), we easily reduce to the case where $W$ is a curve.

If $W$ is an open subset of $\gm$, this is once again a straightforward computation. In general, for a given $v_1 \in  V(K)$, set 
\begin{align*}
g_1: W &\to \gm,\\ w &\mapsto f(v_1,w).
\end{align*}
We have to show that $g_1$ is constant. Assume it is not. Then, it is quasi-finite of degree $d\geq 1$ over its image. Up to shrinking $W$, we may assume that $g_1$ is a composite arrow $W\to W'\to U\subset\gm$ with $W\to W'$ purely inseparable and $W'\to U$ finite and \'etale.

Assume first that $W=W'$. Denote by $\tilde W \to  U$ the Galois closure of $g_1$. There is a commutative diagram
\[\xymatrix{ \mathcal O_{V \times W} ^\times \ar[r]^{\rho_1} \ar[d]^{N} & {\mathcal O^\times_W }  \ar[d]^{N} \\{\mathcal O^\times_{V \times U}} \ar[r]^{\rho_1} & {\mathcal O^\times_U },}\] 
where $N$ is the (multiplicative) norm with respect to the finite étale morphism $g_1$ and $\rho_1$ denotes the restrictions to the fiber above $v_1$ (in particular, it maps $f$ to $g_1$). We claim that $N(f)\in \mathcal O_{V \times_K U} ^\times$ is trivial on $\{v_0 \} \times U$. Indeed, we have $N(f)=f_1 f_2 \ldots f_d,$ where $f=f_1,f_2,\ldots,f_d$ are the images of $f$ with respect to the different embeddings of $\cal O_{V\times_K W}$ in $\cal O_{V\times_K \tilde W}$. These are trivial on  $\{v_0 \} \times \tilde W$, whence the claim. Since we know the conclusion of the Lemma for $W=U$, we get that $N(f)\in \mathcal O_{ V \times U}^\times$ does not depend on $U$. Using commutativity of the diagram above, we compute: $$\rho_1(N(f))=N(g_1)=g_1^d. $$ Thus, $g_1^d$  is constant. Hence so is $g_1$, which finishes the proof when $W=W'$. In general, for a finite purely inseparable morphism of degree $p^r$, the norm is given by $N(x)=x^{p^r}$, so that a straightforward variant of the proof above applies.
\end{proof}

\begin{rem}
The second statement of Lemma \ref{RosenLem}, over a non-separably closed $K$, is false in general. Indeed, surjectivity fails when  $V=W$ is a non-trivial $H$-torsor. This is essentially the only counterexample, as surjectivity  holds whenever $H^1(K,H)=0$ (e.g. for $H=\gm$).
\end{rem}

\begin{rem}
When $H$ is a torus, the proof of the second statement of Lemma \ref{RosenLem} that we provide above uses affine geometry, combined with a norm argument. In this sense, it is an ``inner'' proof. This is a more elementary approach than the use of a normal compactification of $W$ in Rosenlicht's original proof. 
\end{rem}

\end{appendix}

Mathieu \textsc{Florence}, Sorbonne Universit\'e and Universit\'e Paris Cit\'e, CNRS, IMJ-PRG, F-75005 Paris, France.\\
Email address: \texttt{mflorence@imj-prg.fr}\\

Diego \textsc{Izquierdo}, Centre de Math\'ematiques Laurent Schwartz, \'Ecole Polytechnique, Institut Polytechnique de Paris, 91128 Palaiseau Cedex, France\\
Email address: \texttt{diego.izquierdo@polytechnique.edu}\\

Giancarlo \textsc{Lucchini Arteche}, Departamento de Matem\'aticas, Facultad de Ciencias, Universidad de Chile, Las Palmeras 3425, \~Nu\~noa, Santiago, Chile.\\
Email address: \texttt{luco@uchile.cl}

\end{document}